\newtheorem{thm}{Theorem}[section]
\newtheorem{prop}[thm]{Proposition}
\newtheorem{lem}[thm]{Lemma}
\newtheorem{cor}[thm]{Corollary}
\theoremstyle{definition}
\newtheorem{Def}[thm]{Definition}
\newtheorem*{rem*}{Remark}
\newcommand{\kk}{\mathbf{k}}
\newcommand{\xx}{\mathbf{x}}
\newcommand{\Aa}{\mathcal {A}}
\newcommand{\BB}{\mathcal {B}}
\newcommand{\PP}{\mathcal {P}}
\newcommand{\FF}{\mathcal {F}}
\newcommand{\Zz}{\mathbb {Z}}
\def\w{\widetilde}
\def\xr{\xrightarrow}
\numberwithin{equation}{section}
\begin{document}
	\title[On the anisotropy and Lefschetz property]{On the anisotropy and Lefschetz property for PL-spheres}
	\author[F.~Fan]{Feifei Fan}
	\thanks{The author is supported by the National Natural Science Foundation of China (Grant Nos. 11801580, 11871284)}
	\address{Feifei Fan, School of Mathematical Sciences, South China Normal University, Guangzhou, 510631, China.}
	\email{fanfeifei@mail.nankai.edu.cn}
	\subjclass[2020]{Primary 13F55; Secondary 05E40, 05E45.}
	\maketitle
	\begin{abstract}
A simplicial sphere $\Delta$ is said to be  generically
anisotropic over a field $\mathbb{F}$ if, for a certain purely transcendental field extension $\kk$ of $\mathbb{F}$, a certain
Artinian reduction $A$ of the face ring $\kk[\Delta]$ has the following property: For every nonzero homogeneous element $\alpha\in A$ of degree at most $(\dim\Delta+1)/2$, its square $\alpha^2$ is also nonzero.
The importance of this property is that the hard Lefschetz property for simplicial spheres can be derived from it.
A recent result of Papadakis and Petrotou shows that every simplicial sphere is generically anisotropic over any field of characteristic $2$.
In this paper, we give an equivalent condition of being generically anisotropic, and use it to present a simplified proof of Papadakis-Petrotou theorem for PL-spheres. We also prove that the simplicial spheres of dimension $2$ are generically anisotropic over any field $\mathbb{F}$.
	\end{abstract}
	\section{Introduction}\label{sec:introduction}
	\renewcommand{\thethm}{\arabic{thm}}
	This paper is devoted to the study of face rings of simplicial spheres, especially PL-spheres. The face ring $\kk[\Delta]$ of a simplicial complex $\Delta$ is a quotient of a polynomial ring over a field $\kk$. Its properties were investigated  by Hochster \cite{H75}, Reisner \cite{Rei76} and Stanley \cite{Sta75} in 1970's. Face rings play a key role in combinatorial commutative algebra, because their algebraic properties reflect many surprising combinatorial and topological properties of the associated simplicial complexes.  
	
	For example, the hard Lefschetz theorem for face rings of simplicial polytope, which was originally proved by Stanley \cite{S80}, and later by McMullen \cite{Mc93,Mc96}, verifies the necessity part of McMullen's conjecture \cite{Mc71} that posited a complete characterization of face numbers of simplicial polytopes. The sufficiency part was proved by Billera and Lee \cite{BL81}. Since then, one of the most important problems in the realm of face enumeration is whether or not McMullen's
	conditions extend to simplicial spheres or even homology spheres. This question has become known as the $g$-conjecture. The algebraic version of this conjecture, which implies the $g$-conjecture, posits that all homology spheres satisfy the hard Lefschetz theorem.
	
	Recently, a proof of the algebraic $g$-conjecture, due to Adiprasito, has appeared \cite{A18}. Adiprasito's proof is quite technical, because it involves many complicated geometric constructions. More recently, Papadakis and Petrotou \cite{PP20} gave another purely algebraic proof of this conjecture for the case that $\kk$ has characteristic $2$. 
	Then, Adiprasito, Papadakis, and Petrotou posted a new preprint \cite{APP21}, which provides far reaching generalizations of these algebraic results to much
	more general simplicial complexes such as normal pseudomanifolds.
A more surprising result is that Papadakis and Petrotou \cite{PP20} established an essentially stronger algebraic properties than the Lefschetz property for simplicial spheres, which says that every simplicial sphere is generically anisotropic over a filed of characteristic $2$ (see Subsection \ref{subsec:anisotropy}).
In this paper, we mainly follow the idea in \cite{PP20}, and focus on the face rings of PL-spheres.
	
This paper is organized as follows: in Section \ref{sec:preliminaries}, we introduce the basic notions and collect the previous results about face rings. In Section \ref{sec:equivalence}, we establish an equivalent but simpler condition of being generically anisotropic (Lemma \ref{lem:aniso equiv}), and use it to generalize a result in \cite{PP20} that $1$-dimensional simplicial spheres are generically anisotropic over any field $\mathbb{F}$, to stacked spheres (Theorem \ref{thm:0-move}). Section \ref{sec:char 2} is devoted to giving a simple proof of Papadakis-Petrotou theorem (Theorem \ref{thm:PP}) for PL-spheres by using a bistellar move argument (Theorem \ref{thm:bistellar}). In Section \ref{sec:2-sphere}, we prove that the generic anisotropy of odd dimensional PL-spheres implies the generic anisotropy of even dimensional PL-spheres of one higher dimension (Theorem \ref{thm:general char}). As a consequence, we see that every $2$-dimensional simplicial sphere is generically anisotropic over any field $\mathbb{F}$ (Corollary \ref{cor:2-sphere}), solving a conjecture in \cite{PP20,APP21} in this special case.

	\section{Preliminaries}\label{sec:preliminaries}
	\renewcommand{\thethm}{\thesection.\arabic{thm}}
	\subsection{Basic concepts}\label{subsec:notation}
	Throughout this paper, we assume that $\Delta$  is a  simplicial complex.
	If $\Delta$ has $m$ vertices, we usually identify the vertices of $\Delta$ with $[m]=\{1,\dots,m\}$.
	We refer to $i$-dimensional faces as \emph{$i$-faces}, and denote by $\FF_i(\Delta)$ the set of $i$-faces  of $\Delta$.
	A simplicial complex is \emph{pure} if all of its facets (maximal faces) have the same dimension.
	By $\Delta^{m-1}$ we denote the simplicial complex consisting of all subsets of $[m]$; its boundary  $\partial\Delta^{m-1}$ will be the subcomplex of all proper subsets of $[m]$. By abuse of notation, sometimes the symbol $\sigma$ will be used ambiguously to denote a face $\sigma\in\Delta$ and also the simplicial complex consisting of $\sigma$ and all its faces.

For a $(d-1)$-dimensional simplicial complex $\Delta$, the \emph{$f$-vector} of $\Delta$ is
\[(f_0,f_1,\dots,f_{d-1}),\]
where $f_i$ is the number of the $i$-dimensional faces of $\Delta$.
Sometimes it is convenient to set $f_{-1}=1$ corresponding to the empty set. The \emph{$h$-vector} of $\Delta$ is the integer vector
$(h_0,h_1,\dots,h_d)$ defined from the equation
\[
h_0t^d+\cdots+h_{d-1}t+h_d=f_{-1}(t-1)^d+f_0(t-1)^{d-1}+\cdots+f_{d-1}.
\]
	
	The \emph{link} and \emph{star} of a face $\sigma\in\Delta$ are respectively the subcomplexes
	\[\begin{split}
		\mathrm{lk}_\sigma\Delta=&\{\tau\in\Delta:\tau\cup\sigma\in\Delta,\tau\cap\sigma=\emptyset\};\\
		\mathrm{st}_\sigma\Delta=&\{\tau\in\Delta:\tau\cup\sigma\in\Delta\}.
	\end{split}\]
	
	The \emph{join} of two simplicial complexes $\Delta$ and $\Delta'$, where the vertex set $\FF_0(\Delta)$ is disjoint from $\FF_0(\Delta')$, is the simplicial complex
	\[\Delta*\Delta'=\{\sigma\cup\sigma':\sigma\in\Delta, \sigma'\in\Delta'\}.\]
	
	Let $\Delta$ be a pure simplicial complex of dimension $d$ and $\sigma\in\Delta$ a $(d-i)$-face such that $\mathrm{lk}_\sigma\Delta=\partial\Delta^i$ and the subset $\tau=\FF_0(\Delta^i)\subset \FF_0(\Delta)$ is not a face of $\Delta$. Then the operation $\chi_\sigma$ on $\Delta$ defined by
	\[\chi_\sigma\Delta=(\Delta\setminus\mathrm{st}_\sigma\Delta)\cup(\partial\sigma*\tau)\]
	is called a \emph{bistellar $i$-move}. Obviously we have $\chi_\tau\chi_\sigma\Delta=\Delta$. Two pure simplicial complexes  are \emph{bistellarly equivalent}  if one is transformed to another by a finite sequence of bistellar moves.
	
	A \emph{piecewise linear (PL for short) $d$-sphere} is a simplicial complex which has a common subdivision with $\partial\Delta^{d+1}$. A \emph{PL $d$-manifold} is a simplicial complex $\Delta$ of dimension $d$ such that $\mathrm{lk}_\sigma\Delta$ is a PL-sphere of dimension  $d-|\sigma|$ for every nonempty face $\sigma\in\Delta$, where $|\sigma|$ denotes the cardinality of $\sigma$.
	It is obvious that two bistellarly equivalent PL-manifolds are PL homeomorphic, i.e., they have a common subdivision. The following fundamental result shows that the converse is also true.
	\begin{thm}[Pachner {\cite[(5.5)]{P91}}]\label{thm:pachner}
		Two PL-manifolds are bistellarly equivalent if and only if they are PL homeomorphic.
	\end{thm}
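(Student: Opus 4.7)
The forward direction is essentially built into the definition of a bistellar move, as the text before the theorem already observes. Each move $\chi_\sigma$ replaces the closed star $\sigma*\partial\tau$ by $\partial\sigma*\tau$, and these two subcomplexes together form the boundary $\partial(\sigma*\tau)$ of the $(d+1)$-simplex $\sigma*\tau$, meeting along their common boundary $\partial\sigma*\partial\tau$. Consequently the replacement can be interpolated through $\sigma*\tau$ and preserves the PL-homeomorphism type of the ambient manifold; iterating gives the same conclusion for any finite sequence of moves.

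For the nontrivial converse, let $\Delta_0$ and $\Delta_1$ be PL-homeomorphic $d$-manifolds. My plan is to encode the PL-homeomorphism in a $(d+1)$-dimensional triangulated cobordism $W$ with $\partial W=\Delta_0\sqcup\Delta_1$, and then convert $W$ one top-dimensional simplex at a time into a sequence of bistellar moves on the boundary. Such a $W$ can be built explicitly from the PL-homeomorphism data: choose a common simplicial subdivision $\Delta'$ of $\Delta_0$ and of $\Delta_1$ (which exists by definition of PL-homeomorphism) and cone $\Delta_0$ and $\Delta_1$ to $\Delta'$ from opposite sides, producing a PL triangulation of $|\Delta_0|\times[0,1]$.

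To process $W$, order its $(d+1)$-simplices as $\Sigma_1,\dots,\Sigma_N$ so that each $\Sigma_j$ meets $W_{j}:=\bigcup_{k\ge j+1}\Sigma_k$ in a PL $d$-ball contained in $\partial W_{j}$. Passing from $W_{j-1}$ to $W_{j}$ by deleting $\Sigma_j$ replaces that boundary ball by its complementary $d$-ball in $\partial\Sigma_j$; since the two halves of $\partial\Sigma_j$ are precisely the two sides of a bistellar move in the sense of the paper, the net effect on the current boundary is exactly one such move. Peeling $W$ in this fashion converts the cobordism into a sequence of bistellar moves taking $\Delta_0=\partial W_0$ to $\Delta_1=\partial W_N$.

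The principal obstacle is that PL balls need not be shellable, so a shelling order of $W$ may fail to exist outright. Pachner's resolution, which I would follow, is a double induction on $d$ and on the number of facets of $W$: whenever no immediate shelling step is available, subdivide $W$ in the interior; such a subdivision affects the boundary only through operations that, by the inductive hypothesis in dimension $d-1$ (applied to links of the interior faces being subdivided), are themselves realisable by bistellar moves on $\partial W$. Iterating subdivision and shelling steps eventually exhausts $W$ and produces the required bistellar path between $\Delta_0$ and $\Delta_1$. Carrying out this double induction rigorously---in particular, verifying at each step that the admissibility condition $\tau\notin\Delta$ for a bistellar move is preserved---is the most delicate part of the argument.
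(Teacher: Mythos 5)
The paper does not prove this statement; it is cited directly from Pachner's 1991 paper \cite[(5.5)]{P91}, so there is no internal proof to compare against. Your sketch should therefore be judged on its own merits as an outline of Pachner's argument.

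Your forward direction is correct, and your plan for the converse (triangulate a cobordism, peel it one top simplex at a time, read off bistellar moves on the boundary) does capture the broad shape of Pachner's approach via shellings. But two of the steps, as written, conceal the real content. First, the construction of $W$ by ``coning $\Delta_0$ and $\Delta_1$ to $\Delta'$ from opposite sides'' is not literally a cone operation; to get an honest simplicial complex triangulating $|\Delta_0|\times[0,1]$ that restricts to $\Delta_0$ and $\Delta_1$ on the two ends you need the simplicial mapping cylinder of the subdivision maps $\Delta'\to\Delta_0$ and $\Delta'\to\Delta_1$ (built face by face, e.g.\ as in Rourke--Sanderson or Cohen), and it should be verified that the result is a PL $(d+1)$-manifold with the stated boundary. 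Second, and more seriously, the final paragraph hides the entire difficulty of the theorem. You correctly identify that a shelling order of $W$ may not exist, but the proposed fix --- ``subdivide in the interior; by induction on $d$ this only changes $\partial W$ by bistellar moves'' --- is not self-evidently coherent: interior subdivisions of $W$ do not touch $\partial W$ at all, so it is unclear what is being claimed, and the place where the inductive hypothesis in dimension $d-1$ actually enters (links of faces meeting the boundary, and the interaction between stellar subdivision and bistellar equivalence, i.e.\ Alexander's theorem and its bistellar refinement) is precisely where Pachner's argument lives. As it stands the sketch asserts that the obstruction can be overcome rather than showing how, and the ``delicate part'' you flag at the end --- verifying the admissibility condition $\tau\notin\Delta$ at each peeling step --- is a genuine additional issue that can fail for an arbitrary ordering of the $(d+1)$-simplices. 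In short, the strategy is the right one, but the proof has not been given; it has been outsourced to the two sentences where the hard work would have to happen.
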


	A simplicial complex $\Delta$ is called a \emph{$\kk$-homology $d$-sphere} ($\kk$ is a field) if
 \[H_*(\mathrm{lk}_\sigma\Delta;\kk)=H_*(S^{d-|\sigma|};\kk)\quad \text{for all }\sigma\in\Delta \text{ (including $\sigma=\emptyset$)}.\]
(Remark: Usually, the terminology ``homology sphere"  means a manifold having the homology of a sphere. Here we take it in a more relaxed sense than its usual meaning.)
	
	\subsection{Face rings and l.s.o.p}\label{subsec:l.s.o.p.}
	For a field $\kk$, let $S=\kk[x_1,\dots,x_m]$ be the polynomial algebra with one generator for each
	vertex in $\Delta$. It is a graded algebra by setting $\deg x_i=1$.
	The \emph{Stanley-Reisner ideal} of $\Delta$ is
	\[I_\Delta:=(x_{i_1}x_{i_2}\cdots x_{i_k}:\{i_1,i_2,\dots,i_k\}\not\in\Delta)
	\]
	The \emph{Stanley-Reisner ring} (or \emph{face ring}) of $\Delta$ is the quotient \[\kk[\Delta]:=S/I_\Delta.\]
	Since $I_\Delta$ is a monomial ideal, the quotient ring $\kk[\Delta]$ is graded by degree.
	
	For a face $\sigma=\{i_1,\dots,i_k\}\in\FF_{k-1}(\Delta)$, denote by $\xx_\sigma=x_{i_1}\cdots x_{i_k}\in\kk[\Delta]$ the face monomial corresponding to $\sigma$.
	
A sequence $\Theta=(\theta_1,\dots,\theta_d)$ of $d=\dim\Delta+1$ linear forms in $S$ is called a \emph{linear
	system of parameters} (or \emph{l.s.o.p.} for short) if
	\[\kk(\Delta;\Theta):=\kk[\Delta]/(\Theta)\]
	has Krull dimension zero, i.e., it is a finite-dimensional $\kk$-space.
	The quotient ring $\kk(\Delta;\Theta)$ is an \emph{Artinian reduction} of $\kk[\Delta]$. We will use the simplified notation $\kk(\Delta)$ for $\kk(\Delta;\Theta)$ whenever it creates no confusion, and write the component
	of degree $i$ of $\kk(\Delta)$ as $\kk(\Delta)_i$. For a subcomplex $\Delta'\subset\Delta$, let $I$ be the ideal of $\kk[\Delta]$ generated by faces in $\Delta\setminus\Delta'$, and denote $I/I\Theta$ by $\kk(\Delta,\Delta';\Theta)$ or simply $\kk(\Delta,\Delta')$.
	
	A linear sequence $\Theta=(\theta_1,\dots,\theta_d)$ is an l.s.o.p if and only if the restriction $\Theta_\sigma=r_\sigma(\Theta)$ to each face $\sigma\in\Delta$ generates the polynomial algebra $\kk[x_i:i\in\sigma]$; here $r_\sigma:\kk[\Delta]\to\kk[x_i:i\in\sigma]$ is the projection homomorphism. If $\Theta$ is an l.s.o.p. for $\kk[\Delta]$, then $\kk(\Delta)$ is spanned by the face monomials (see \cite[II. Theorem 5.1.16]{BH98}). It is a general fact that if $\kk$ is an infinite field, then $\kk[\Delta]$ admits an l.s.o.p. (Noether normalization lemma). 
	
	Suppose $\Theta=(\theta_i=\sum_{j=1}^m a_{ij}x_j)_{i=1}^d$ is an l.s.o.p. for $\kk[\Delta]$. Then there is an associated $d\times m$ matrix $M_\Theta=(a_{ij})$.
Let $\boldsymbol{\lambda}_i=(a_{1i},a_{2i},\dots,a_{di})^T$ denote the column vector corresponding to the vertex $i\in[m]$. For any ordered subset $I=(i_1,\dots,i_k)\subset [m]$, the submatrix $M_\Theta(I)$ of $M_\Theta$ is defined to be
\[M_\Theta(I)=(\boldsymbol{\lambda}_{i_1},\dots,\boldsymbol{\lambda}_{i_k}).\]
	
	\subsection{Cohen-Macaulay and Gorenstein complexes}
	Throughout this subsection, $\kk$ is an infinite field of arbitrary characteristic.
	
	Let $\Delta$ be a simplicial complex of dimension $d-1$. The face ring $\kk[\Delta]$ is a \emph{Cohen-Macaulay ring} if for any l.s.o.p $\Theta=(\theta_1,\dots,\theta_d)$, $\kk[\Delta]$ is a free $\kk[\theta_1,\cdots,\theta_d]$ module. In this case, $\Delta$ is called a \emph{Cohen-Macaulay complex over $\kk$}.
	
	If $\Delta$ is Cohen-Macaulay, the following result of Stanley shows that the $h$-vector of $\Delta$ has a pure algebraic description.
	\begin{thm}[Stanley \cite{Sta75}]\label{thm:stanley}
		Let $\Delta$ be a $(d-1)$-dimensional Cohen-Macaulay complex and let $\Theta=
		(\theta_1,\dots,\theta_d)$ be any l.s.o.p. for $\kk[\Delta]$. Then
		\[\dim_\kk\kk(\Delta)_i=h_i(\Delta),\quad \text{for all } 0\leq i\leq d.\]
	\end{thm}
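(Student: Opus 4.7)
The plan is a standard two-step Hilbert-series computation. First I would compute the Hilbert series of $\kk[\Delta]$ directly from the combinatorics of $\Delta$: each face $\sigma \in \FF_{i-1}(\Delta)$ contributes a subring $\kk[x_j : j \in \sigma]$ whose Hilbert series is $t^i/(1-t)^i$ after accounting for the interior monomials, and a standard inclusion-exclusion / face-counting argument gives
\[
\mathrm{Hilb}(\kk[\Delta];t) \;=\; \sum_{i=0}^{d} \frac{f_{i-1}\, t^{i}}{(1-t)^{i}} \;=\; \frac{h_0 + h_1 t + \cdots + h_d t^{d}}{(1-t)^{d}},
\]
where the second equality is just a repackaging of the defining relation of the $h$-vector given in Subsection 2.1 (substitute $t \mapsto t$, clear denominators).

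Second, I would use the Cohen-Macaulay hypothesis to turn this into a statement about $\kk(\Delta)$. Since $\kk[\Delta]$ is a free $\kk[\theta_1,\dots,\theta_d]$-module, the l.s.o.p.\ $\Theta$ is a regular sequence on $\kk[\Delta]$. Quotienting by a homogeneous nonzerodivisor $\theta_i$ of degree $1$ multiplies the Hilbert series by $(1-t)$, so after $d$ steps
\[
\mathrm{Hilb}(\kk(\Delta);t) \;=\; (1-t)^{d}\, \mathrm{Hilb}(\kk[\Delta];t) \;=\; h_0 + h_1 t + \cdots + h_d t^{d}.
\]
Reading off the coefficient of $t^{i}$ gives $\dim_{\kk} \kk(\Delta)_i = h_i(\Delta)$, as required.

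There is no real obstacle: both ingredients are standard. The only point deserving care is the verification that each $\theta_i$ acts as a nonzerodivisor on $\kk[\Delta]/(\theta_1,\dots,\theta_{i-1})$, but this is immediate from the freeness guaranteed by the Cohen-Macaulay property (indeed, an l.s.o.p.\ on a Cohen-Macaulay ring is automatically a regular sequence). The face-counting identity for $\mathrm{Hilb}(\kk[\Delta];t)$ is combinatorial and independent of $\kk$, so no characteristic assumption enters. I would cite \cite[II.~Theorem 5.1.16]{BH98} (already invoked in Subsection 2.2) for the freeness-regularity equivalence and present the computation as above.
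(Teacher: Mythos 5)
Your argument is correct and is the standard proof, essentially the one found in Stanley's book and in Bruns--Herzog. The paper itself states this result without proof, simply citing Stanley, so there is no in-paper argument to compare against; your two steps (the face-counting identity $\mathrm{Hilb}(\kk[\Delta];t)=\sum_i f_{i-1}t^i/(1-t)^i=(\sum_j h_j t^j)/(1-t)^d$, followed by the observation that an l.s.o.p.\ on a Cohen--Macaulay ring is a regular sequence of degree-one elements and hence each quotient multiplies the Hilbert series by $(1-t)$) are exactly the textbook route and are both sound.
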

	
	The face ring $\kk[\Delta]$ is a \emph{Gorenstein ring} if  $\kk(\Delta;\Theta)$ is a Poincar\'e duality $\kk$-algebra for any l.s.o.p $\Theta$.
	In this case, $\Delta$ is called a \emph{Gorenstein complex over $\kk$}.
	Further, $\Delta$ is called \emph{Gorenstein*} if $\kk[\Delta]$ is Gorenstein and $\Delta$ is not a cone, i.e., $\Delta\neq\Delta^0*\Delta'$ for any $\Delta'$.
	
	These algebraic properties of face rings have combinatorial-topological characterisations as follows.
	\begin{thm}\label{thm:algebraic property}
		Let $\Delta$ be a simplicial complex. Then
		\begin{enumerate}[(a)]
			\item {\rm(Reisner \cite{Rei76})} $\Delta$ is Cohen-Macaulay (over $\kk$) if and only if for all faces $\sigma\in\Delta$ (including $\sigma=\emptyset$)
			and $i<\dim\mathrm{lk}_\sigma\Delta$, we have $\w H_i(\mathrm{lk}_\sigma\Delta;\kk)=0$.\vspace{8pt}
			
			\item {\rm(Stanley \cite[II. Theorem 5.1]{S96})} $\Delta$ is Gorenstein* (over $\kk$) if and only if it is a $\kk$-homology sphere.\vspace{8pt}
		\end{enumerate}
	\end{thm}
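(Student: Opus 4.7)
The plan is to prove the two parts in parallel using Hochster's formula, which is the standard bridge between combinatorics of $\Delta$ and the local cohomology of $\kk[\Delta]$. Let $\mathfrak{m}=(x_1,\dots,x_m)$ be the irrelevant maximal ideal. Hochster's formula says that for $a\in\Nn^m$ with support $\sigma=\{i:a_i>0\}$,
\[
\dim_\kk H^i_{\mathfrak{m}}(\kk[\Delta])_{-a} = \dim_\kk \w H_{i-|\sigma|-1}(\mathrm{lk}_\sigma\Delta;\kk),
\]
and all other $\Zz^m$-graded components of $H^i_{\mathfrak{m}}(\kk[\Delta])$ vanish. The shape of this formula is what makes both parts of the theorem essentially formal once the right algebraic dictionary is in place.

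For part (a), the Krull dimension of $\kk[\Delta]$ equals $d=\dim\Delta+1$, and by standard commutative algebra $\kk[\Delta]$ is Cohen-Macaulay iff $H^i_{\mathfrak{m}}(\kk[\Delta])=0$ for all $i<d$. Plugging into Hochster's formula and setting $j=i-|\sigma|-1$, this vanishing is equivalent to $\w H_j(\mathrm{lk}_\sigma\Delta;\kk)=0$ for every $\sigma\in\Delta$ and every $j<\dim\mathrm{lk}_\sigma\Delta$, which is precisely Reisner's condition (including the case $\sigma=\emptyset$, which recovers $\w H_j(\Delta;\kk)=0$ for $j<d-1$).

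For part (b), the strategy is to combine (a) with the characterization of Gorenstein-ness via the canonical module. Assume first that $\Delta$ is Cohen-Macaulay (which is necessary for both sides). Local duality together with Hochster's formula identifies the graded pieces of the canonical module $\omega_{\kk[\Delta]}$ in terms of the top reduced cohomology $\w H_{d-|\sigma|-1}(\mathrm{lk}_\sigma\Delta;\kk)$ of the links. A graded Cohen-Macaulay ring is Gorenstein iff $\omega$ is free of rank one. Working through the bigraded structure, this forces every top link cohomology to be one-dimensional, which together with Reisner's criterion is exactly the $\kk$-homology sphere condition. The remaining point is to match ``not a cone'' with the global requirement $\w H_{d-1}(\Delta;\kk)\cong\kk$: if $\Delta=v*\Delta'$ is a cone then $\w H_{d-1}(\Delta;\kk)=0$ (the cone is contractible), so the non-cone condition is forced by the sphere hypothesis; conversely one checks directly from the K\"unneth-type decomposition $\kk[v*\Delta']\cong\kk[x_v]\otimes_\kk\kk[\Delta']$ that a cone face ring is Gorenstein iff $\Delta'$ itself is Gorenstein over $\kk$, so restricting to non-cones is exactly what makes the characterization sharp.

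The main obstacle is the careful bigraded bookkeeping in Hochster's formula and in local duality, together with the case analysis needed to separate Gorenstein from Gorenstein$^*$. Once the canonical module of $\kk[\Delta]$ is described explicitly in terms of the $\w H_*(\mathrm{lk}_\sigma\Delta;\kk)$, both directions of (b) are short; conceptually the cone case has to be excluded by hand because a cone face ring is a polynomial extension, which is always Gorenstein when the base is, but produces $\w H_{d-1}=0$ on the topological side.
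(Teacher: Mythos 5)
The paper cites this theorem (Reisner for part (a), Stanley II.5.1 for part (b)) and supplies no proof, so there is no in-paper argument to compare against; what you wrote is the standard modern route via Hochster's formula.

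For (a) the sketch is essentially complete: $\kk[\Delta]$ is Cohen-Macaulay iff $H^i_{\mathfrak{m}}(\kk[\Delta])=0$ for $i<d$, and Hochster's formula converts this to the vanishing of $\w H_j(\mathrm{lk}_\sigma\Delta;\kk)$ for $j<d-|\sigma|-1$. The one thing worth saying explicitly is that this vanishing already forces $\Delta$ to be pure, so $d-1-|\sigma|=\dim\mathrm{lk}_\sigma\Delta$ for every face; that is what makes the statement with $\dim\mathrm{lk}_\sigma\Delta$ equivalent to what you actually get out of the formula.

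For (b) there is a genuine gap. Local duality plus Hochster's formula tells you the $\Zz^m$-graded Hilbert function of the canonical module $\omega_{\kk[\Delta]}$, and the homology sphere condition is equivalent to that Hilbert function matching the one of $\kk[\Delta]$ up to a twist. But Gorenstein means $\omega_{\kk[\Delta]}$ is free of rank one, i.e.\ isomorphic to $\kk[\Delta]$ as a graded module up to shift, and Hilbert-function agreement does not in general make a Cohen-Macaulay module free. The heart of Stanley's II.5.1 is an explicit module-theoretic description of $\omega_{\kk[\Delta]}$ --- Gr\"abe's colimit presentation over faces, or in the homology ball case the fact (which the paper itself invokes via Hochster in Proposition~2.9) that $\omega$ is the ideal generated by interior faces --- and it is this structural input, not the Hilbert function alone, that produces the isomorphism $\omega_{\kk[\Delta]}\cong\kk[\Delta]$. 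Your phrase ``working through the bigraded structure, this forces\ldots'' hides precisely that step. A smaller point: Stanley's theorem actually says $\kk[\Delta]$ is Gorenstein iff $\mathrm{core}(\Delta)$ is a $\kk$-homology sphere; peeling off a single cone vertex $v$ as you do is fine as the inductive step, but understates what the core construction does when several vertices lie in every facet.
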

	
	\subsection{Lefschetz property of face rings}\label{subsec:lefschetz}
	The Lefschetz property of face rings is strongly connected to many topics in algebraic geometry, commutative algebra and combinatorics. For instance, the Lefschetz property for homology spheres is just the algebraic $g$-conjecture, and then implies the $g$-conjecture. Its general definition is as follows.
	
	\begin{Def}\label{def:lefschetz}
		Let $\Delta$ be a Cohen-Macaulay complex (over $\kk$). We say that $\kk[\Delta]$ (or simply $\Delta$)  has the \emph{Lefschetz property}
		if there is an Artinian reduction $\kk(\Delta;\Theta)$ of $\kk[\Delta]$ and a linear form
		$\omega\in\kk[\Delta]_1$ such that  the multiplication map
	\[\cdot\omega^{i}:\kk(\Delta;\Theta)_{j}\to\kk(\Delta;\Theta)_{j+i}\]
 is either surjective or injective for all $i\geq 1$ and $j\geq 0$.
	\end{Def}
In the case of a $\kk$-homology $(d-1)$-sphere $\Delta$, the Lefschetz property for $\Delta$ is equivalent to the property that there exists $(\Theta,\omega)$ such that the multiplication map	\[\cdot\omega^{d-2i}:\kk(\Delta;\Theta)_{i}\to\kk(\Delta;\Theta)_{d-i}\]
is an isomorphism for all $i\leq d/2$, known as the \emph{hard Lefschetz property}.

Note that the set of $(\Theta,\omega)$ in definition \ref{def:lefschetz} is
Zariski open, but it may be empty. The recent striking result by Adiprasito \cite{A18}, and by Adiprasito, Papadakis, and Petrotou \cite{APP21} shows that this set is nonempty for homology spheres.

	\begin{thm}[Hard Lefschetz \cite{A18,APP21}]\label{thm:hard lefschetz}
    Let $\kk$ be an infinite field. Then every $\kk$-homology sphere has the Lefschetz property.
	\end{thm}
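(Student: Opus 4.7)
The plan is to deduce hard Lefschetz from the stronger \emph{generic anisotropy} property highlighted in the abstract: for an appropriate Artinian reduction $\kk(\Delta;\Theta)$ (after a purely transcendental extension of the scalars), every nonzero $\alpha\in\kk(\Delta;\Theta)_i$ with $i\le d/2$ should satisfy $\alpha^2\ne 0$. Combined with Poincar\'e duality from the Gorenstein* structure (Theorem \ref{thm:algebraic property}(b)), this yields hard Lefschetz by a standard bilinear-form argument: for generic $\omega\in\kk(\Delta;\Theta)_1$, the quadratic form $\alpha\mapsto\omega^{d-2i}\alpha^2$ on $\kk(\Delta;\Theta)_i$ lands in $\kk(\Delta;\Theta)_d\cong\kk$ and is non-degenerate (its determinant is a polynomial in the coefficients of $\omega$ that does not vanish identically, by the anisotropy hypothesis), which forces multiplication by $\omega^{d-2i}$ to be an isomorphism.

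Then the real task is to establish generic anisotropy. For PL-spheres, my plan is induction on dimension combined with a bistellar-move argument via Pachner's theorem (Theorem \ref{thm:pachner}). The base case is low-dimensional (a polygon, or even just $\partial\Delta^d$), handled by direct computation on the face ring. For the inductive step, every PL $(d-1)$-sphere is bistellarly equivalent to $\partial\Delta^d$, so it suffices to show that (a) $\partial\Delta^d$ is generically anisotropic, a computation in a ring with a transparent presentation, and (b) generic anisotropy is preserved under each bistellar move $\Delta\rightsquigarrow\chi_\sigma\Delta$. For (b) the Artinian reductions of $\Delta$ and $\chi_\sigma\Delta$ agree on the common subcomplex and differ only in the stars of $\sigma$ and $\tau$; the discrepancy can be controlled through the link $\mathrm{lk}_\sigma\Delta=\partial\tau$, which is a lower-dimensional sphere and hence anisotropic by induction.

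The main obstacle will be the bistellar step (b) in odd characteristics: one must select an l.s.o.p.\ for $\chi_\sigma\Delta$ sufficiently generic relative to that of $\Delta$, and then track the quadratic form $\alpha\mapsto\alpha^2$ through the move. In characteristic $2$ the cross-terms in the facet expansion of this form vanish, which underlies the simplified proof of Papadakis--Petrotou (Theorem \ref{thm:PP}) that this paper reproduces. In other characteristics the cross-terms persist, and controlling them is precisely the work done here for dimension $2$ via the odd-to-even dimension step (Theorem \ref{thm:general char} and Corollary \ref{cor:2-sphere}). Finally, extending the argument from PL-spheres to arbitrary $\kk$-homology spheres (where Pachner's theorem is unavailable, for instance because of the double-suspension phenomenon) requires the more flexible combinatorial framework of \cite{APP21} working with normal pseudomanifolds, and this is the conceptually hardest step in the proof of the full theorem.
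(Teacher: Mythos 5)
The paper does not prove Theorem~\ref{thm:hard lefschetz}: it is quoted from \cite{A18,APP21}, and what this paper actually establishes is strictly weaker (generic anisotropy of PL-spheres in characteristic~$2$, and of $2$-spheres over any field). Measured on its own terms, your proposal has a gap that cannot be repaired within the paper's toolkit. The route ``generic anisotropy $\Rightarrow$ Lefschetz'' transfers only within a fixed characteristic, and generic anisotropy over a field of characteristic $\neq 2$ is, for $\dim\Delta>2$, exactly the open conjecture of \cite{PP20,APP21} that the introduction says Corollary~\ref{cor:2-sphere} settles only ``in this special case.'' Your inductive bistellar plan therefore stalls precisely where you admit that ``in other characteristics the cross-terms persist'': that sentence names the unfilled obstruction, it does not resolve it. One cannot obtain the Lefschetz property over an arbitrary infinite field by this strategy without first proving that conjecture, which neither this paper nor \cite{PP20} does; the actual proofs in \cite{A18,APP21} do not proceed by establishing anisotropy in the target characteristic.

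There is also a more local flaw in your opening reduction. Anisotropy asserts $\alpha^2\neq 0$ in $\kk(\Delta)_{2i}$, a statement about the product landing in degree $2i$, not degree $d$; it does not by itself preclude $\omega^{d-2i}\alpha^2=0$ for every $\omega\in\kk(\Delta)_1$ when $2i<d$, so the claimed nondegeneracy of the quadratic form $\alpha\mapsto\omega^{d-2i}\alpha^2$ does not follow. The genuine implication in \cite[Section~9]{PP20} is an induction over vertex links and uses anisotropy of the links of $\Delta$, not merely of $\Delta$ itself. Finally, the passage from PL-spheres (where Pachner's theorem applies) to arbitrary homology spheres is gestured at with a pointer to \cite{APP21} but not argued. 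The proposal accurately recapitulates the circle of ideas surrounding the theorem, but it does not constitute a proof of the statement as given, and the paper itself treats the theorem as a black-box citation rather than something it proves.
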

	
\subsection{Anisotropy of face rings}\label{subsec:anisotropy}
As we mentioned in Section \ref{sec:introduction}, in order to prove the Lefschetz property for simplicial spheres, Papadakis and Petrotou \cite{PP20} established a stronger property for face rings of simplicial spheres.
Here is the formal definition.
\begin{Def}\label{def:anisotropy}
Let  $\Delta$ be a $\kk$-homology sphere of dimension $d-1$. An Artinian reduction $\kk(\Delta)$ of $\kk[\Delta]$ is said to be \emph{anisotropic} if for every nonzero element $u\in\kk(\Delta)_i$ with $i\leq d/2$, the square $u^2$ is also nonzero in $\kk(\Delta)_{2i}$. We call $\Delta$  anisotropic over $\kk$ if such an Artinian reduction exists.
\end{Def}

\begin{thm}[{\cite[Papadakis-Petrotou]{PP20}}]\label{thm:PP}
	If $\mathbb{F}$ is a field of characteristic $2$ and $\Delta$ is a $\mathbb{F}$-homology sphere, then there exists a purely transcendental field extension $\kk$ of $\mathbb{F}$ such that $\Delta$ is anisotropic over $\kk$.
\end{thm}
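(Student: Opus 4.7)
The plan is to follow the framework of Papadakis–Petrotou, working with a generic l.s.o.p.\ and exploiting the Frobenius map available in characteristic $2$; this approach uses only the Gorenstein* property of a homology sphere, not any PL-structure, so it applies uniformly to every $\mathbb{F}$-homology sphere. First I would take $\kk = \mathbb{F}(a_{ij} : 1 \le i \le d, 1 \le j \le m)$, where the $a_{ij}$ are algebraically independent transcendentals over $\mathbb{F}$, and set $\theta_i = \sum_j a_{ij}\, x_j$. By Theorem \ref{thm:algebraic property}(b), $\Delta$ is Gorenstein* over $\mathbb{F}$, hence also over $\kk$, and in particular Cohen–Macaulay; the genericity of the $a_{ij}$ guarantees that $\Theta = (\theta_1,\dots,\theta_d)$ is an l.s.o.p., so $A := \kk(\Delta;\Theta)$ is a Poincar\'e duality $\kk$-algebra with socle $A_d \cong \kk$, and I fix such an isomorphism $\deg : A_d \to \kk$ once and for all.

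The key reduction is Frobenius-additivity: in characteristic $2$ the squaring map $F : A_i \to A_{2i}$, $u \mapsto u^2$, is additive. Hence, to establish anisotropy it suffices to prove that $F$ has trivial kernel in every degree $i \le d/2$. By Poincar\'e duality, this is equivalent to showing that for every nonzero $u = \sum_\tau c_\tau \xx_\tau \in A_i$ (sum over $(i-1)$-faces $\tau$) there exists $v \in A_{d-2i}$ with $\deg(u^2 v) \ne 0$. Using Frobenius additivity a second time, $u^2 = \sum_\tau c_\tau^2\, \xx_\tau^2$, so the problem reduces to controlling the ``diagonal'' pairing on face monomials, where the quadratic dependence on the $c_\tau$ has become linear on the $c_\tau^2$.

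The core technical input is an explicit formula for the degree map. One shows, after suitable normalization, that $\deg(\xx_\sigma) = 1/\det M_\Theta(\sigma)$ for each facet $\sigma$, and more generally that for any element $w$ of complementary degree, $\deg(\xx_\tau \cdot w)$ is obtained by applying an explicit polynomial differential operator in the variables $a_{ij}$ to the ``canonical polynomial''
\[
\Psi_\Delta \;=\; \sum_{\sigma \in \FF_{d-1}(\Delta)} \frac{1}{\det M_\Theta(\sigma)} \;\in\; \kk.
\]
Combining this formula with the Frobenius expansion of $u^2$ yields an identity of the form $\deg(u^2 v) = \bigl(D_u^2 \circ D_v\bigr)(\Psi_\Delta)$, where $D_u$ is an explicit derivation in the $a_{ij}$ built linearly from the coefficients $c_\tau$ and the entries of $M_\Theta$. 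In characteristic $2$, the Frobenius identity $D_u^2 = $ (iterated single derivation) collapses the computation into a tractable form.

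The main obstacle is to show that $D_u \Psi_\Delta \ne 0$ whenever $u \ne 0$; this is where the proof does real work. Two ingredients are decisive: (i) the algebraic independence of the $a_{ij}$ over $\mathbb{F}$, which prevents any accidental cancellation among the rational functions $1/\det M_\Theta(\sigma)$; and (ii) the Gorenstein* structure of $\Delta$, which is needed to identify the kernel of the map $u \mapsto D_u \Psi_\Delta$ with the relations already imposed by $\Theta$, so that nonzero $u \in A_i$ correspond to nonzero differential expressions. I would prove the non-vanishing by induction on $d = \dim\Delta + 1$: the base case $d=1$ is immediate, and the inductive step exploits that each vertex link $\mathrm{lk}_v\Delta$ is itself an $\mathbb{F}$-homology sphere of one lower dimension, so non-vanishing in $\Delta$ can be transferred to its links by a localization/residue argument at $v$ (extracting the coefficient of $1/a_{k,v}$ in $D_u \Psi_\Delta$ and recognizing the result as the analogous quantity for $\mathrm{lk}_v\Delta$). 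This inductive transfer is the subtlest step, and it uses only the local Gorenstein* property provided by Theorem \ref{thm:algebraic property}(b), never the existence of a bistellar reduction to the boundary of a simplex.
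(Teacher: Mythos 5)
Your outline follows the original global strategy of Papadakis--Petrotou rather than the route taken in this paper (which first reduces to odd-dimensional PL-spheres by a suspension argument and then propagates anisotropy along bistellar moves via Pachner's theorem, using Lee's formula and a single partial differential operator $\PP$); that is a legitimate choice, and if completed it would even cover non-PL homology spheres, which the paper's proof does not. But as written the argument has a genuine gap exactly at the step you yourself flag as doing ``real work''. First, the reduction is not closed: anisotropy requires $\deg(u^2v)\neq 0$ for some $v$ of degree $d-2i$, i.e.\ in your notation $(D_u^2\circ D_v)(\Psi_\Delta)\neq 0$, and you never explain how this would follow from the statement you propose to prove, namely $D_u\Psi_\Delta\neq 0$ (an expression of the wrong degree, with no $v$ in sight). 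The engine of the characteristic-$2$ proof is a precise identity asserting that a suitable first-order derivative of $\Psi_\Delta(\xx_\tau u^2)$ equals the \emph{square} of a value of the form $\Psi(\xx_{\tau'}u)$ in complementary degree $d-i$; it is the perfect square on the right-hand side that lets Poincar\'e duality in degree $i\le d/2$ force non-vanishing. Your sketch gestures at ``$D_u^2=$ iterated single derivation'' but never produces such an identity, and without it the chain of implications does not close.

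Second, your proposed proof of the non-vanishing by induction on dimension via vertex links is asserted rather than argued, and the obstacles are substantial: a nonzero $u$ in the Artinian reduction need not have nonzero image in the Artinian reduction of any single link $\mathrm{lk}_v\Delta$; the l.s.o.p.\ induced on a link is not of the generic form demanded by the inductive hypothesis, so that hypothesis does not apply as stated; and identifying ``the coefficient of $1/a_{k,v}$'' in $D_u\Psi_\Delta$ with the analogous quantity for the link requires exactly the explicit facet-by-facet computation (Lee's formula, Theorem \ref{thm:Lee}) that constitutes the technical content of both \cite{PP20} and the present paper. (A smaller point: your $\Psi_\Delta=\sum_\sigma 1/\det M_\Theta(\sigma)$ is a single scalar, conflating the canonical function with one of its values; the differential-operator formalism has to act on the function, or on the individual summands $1/A_F$, for $D_v$ to make sense.) In short, the skeleton is recognizably that of \cite{PP20}, but the two load-bearing steps --- the square identity and the non-vanishing of the resulting derivative --- are missing, so the proposal does not yet constitute a proof.
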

It turns out that being anisotropic is stronger than the Lefschetz property in the sence that if every simplicial sphere is anisotropic over $\kk$, then every simplicial sphere has the Lefschetz property over any infinite field $\kk'$ of the same characteristic as $\kk$ (see \cite[Section 9]{PP20}).

The transcendental field extension $\kk$ and the Artinian reduction $\kk(\Delta)$ in Theorem \ref{thm:PP} can be chosen as follows. Suppose that $\dim \Delta=d-1$, $\FF_0(\Delta)=[m]$. Set \[\kk=\mathbb{F}(a_{ij}:1\leq i\leq d,\,1\leq j\leq m),\]
 the field of rational functions on variables $a_{ij}$, 
and set $\theta_i=\sum_{i=1}^m a_{ij}$ for $1\leq i\leq d$. Then $\Theta=(\theta_i:1\leq i\leq d)$ is clearly an l.s.o.p. for $\kk[\Delta]$. For such $\kk$ and $\Theta$, \cite{PP20} shows that $\kk(\Delta;\Theta)$ is anisotropic. In this situation, $\Delta$ is also refered to be \emph{generically anisotropic over $\mathbb{F}$}.

\subsection{Canonical modules for homology balls} In this subsection we recall some results about the \emph{canonical module} (see \cite[I.12]{S96} for the definition) of $\kk[\Delta]$ when $\Delta$ is a homology ball.

Let $\Delta$ be a $\kk$-homology $(d-1)$-ball with boundary $\partial \Delta$. Then there is an exact sequence
\begin{equation}\label{eq:exact}
	0\to I\to \kk[\Delta]\to\kk[\partial\Delta]\to 0,
\end{equation}
where $I$ is the ideal of $\kk[\Delta]$ generated by all faces in $\Delta\setminus\partial\Delta$. By a theorem of Hochster \cite[Theorem  II.7.3]{S96} $I$ is the canonical module of $\kk[\Delta]$. Then from \cite[Theorem I.3.3.4 (d) and Theorem I.3.3.5 (a)]{BH98} we have the following proposition.
\begin{prop}\label{prop:pairing}
	Let $\Delta$ be a $\kk$-homology $(d-1)$-ball. Then there is a perfect bilinear pairing between the Artinian reductions
	\[
	\kk(\Delta)_i\times \kk(\Delta,\partial\Delta)_{d-i}\to \kk(\Delta,\partial\Delta)_d=\kk.
	\]
\end{prop}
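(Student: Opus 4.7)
The plan is to deduce the pairing from the general machinery of canonical modules for Cohen--Macaulay rings, following [BH98]. Since $\Delta$ is a $\kk$-homology $(d-1)$-ball, its links are homology balls or spheres, so by Reisner's criterion (Theorem~\ref{thm:algebraic property}(a)) the face ring $\kk[\Delta]$ is Cohen--Macaulay of Krull dimension $d$. In particular, any l.s.o.p.\ $\Theta=(\theta_1,\dots,\theta_d)$ is a regular sequence on $\kk[\Delta]$, and the Artinian reduction $\kk(\Delta)=\kk[\Delta]/\Theta$ is a graded $\kk$-algebra of Krull dimension $0$.

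Next I would invoke Hochster's theorem (cited above) that $I$ is the canonical module $\omega_{\kk[\Delta]}$, and hence itself a maximal Cohen--Macaulay module. Consequently $\Theta$ is also a regular sequence on $I$, and the general principle that canonical modules commute with regular-sequence reduction (Theorem~I.3.3.5(a) of [BH98]) yields
\[
\kk(\Delta,\partial\Delta)=I/\Theta I \;\cong\; \omega_{\kk(\Delta)}.
\]
Thus $\kk(\Delta,\partial\Delta)$ is identified as the canonical module of the Artinian graded $\kk$-algebra $\kk(\Delta)$.

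I would then appeal to Theorem~I.3.3.4(d) of [BH98]: for an Artinian graded $\kk$-algebra $A$, the canonical module $\omega_A$ coincides, up to a degree shift, with the graded Matlis dual $\mathrm{Hom}_\kk(A,\kk)$. Explicitly, the $A$-module structure is $(a\cdot\phi)(b)=\phi(ab)$, and evaluation on $1\in A$ projects onto the top graded component. Composing the multiplication $A\otimes_\kk\omega_A\to\omega_A$ with this projection produces the desired perfect pairing $\kk(\Delta)_i\times\kk(\Delta,\partial\Delta)_{d-i}\to\kk(\Delta,\partial\Delta)_d=\kk$.

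The main obstacle is pinning down the degree shift so that the socle of $\omega_{\kk(\Delta)}$ sits in degree exactly $d$ and is one-dimensional. I would handle this by a Hilbert-series computation: the standard formula $F_{\omega_R}(t)=(-1)^d F_R(1/t)$ for a $d$-dimensional graded Cohen--Macaulay ring, applied to $R=\kk[\Delta]$ with Hilbert series $\frac{h_0+h_1t+\cdots+h_dt^d}{(1-t)^d}$, gives
\[
F_{\omega/\Theta\omega}(t)=h_d+h_{d-1}t+\cdots+h_0 t^d,
\]
which is the reverse of the Hilbert series of $\kk(\Delta)$. In particular, the top coefficient equals $h_0(\Delta)=1$, guaranteeing $\kk(\Delta,\partial\Delta)_d=\kk$, and the matching dimensions $\dim_\kk\kk(\Delta)_i=h_i=\dim_\kk\kk(\Delta,\partial\Delta)_{d-i}$ confirm that the induced pairing is perfect.
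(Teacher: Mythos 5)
Your argument is correct and follows essentially the same route as the paper: identify the ideal $I$ with the canonical module $\omega_{\kk[\Delta]}$ via Hochster's theorem, pass to the Artinian reduction using [BH98, Thm.~I.3.3.5(a)], and then use [BH98, Thm.~I.3.3.4(d)] to recognize $\kk(\Delta,\partial\Delta)$ as the graded Matlis dual of $\kk(\Delta)$, which is exactly the chain of citations the paper gives. The only addition is your explicit Hilbert-series computation to pin down the degree shift (so that the socle sits in degree $d$ with $h_0=1$), a detail the paper leaves implicit but which is a sensible thing to spell out.
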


\subsection{Lee's formula}\label{subsec:Lee's} 
In this subsection, we introduce a formula due to Lee that expresses non-square-free monomials in $\kk(\Delta)$ in terms of face monomials. First, we recall a useful result in \cite{PP20}.

\begin{lem}[{\cite[Corollary 4.5]{PP20}}]\label{lem:generator}
	Let $\Delta$ be a $(d-1)$-dimensional $\kk$-homology sphere or ball, $\Theta$ be an l.s.o.p. for $\kk[\Delta]$. Suppose that $\sigma_1$ and $\sigma_2$ are two ordered facets of $\Delta$, which have the same orientation in $\Delta$. Then \[\det(M_\Theta(\sigma_1))\cdot\xx_{\sigma_1}=\det(M_\Theta(\sigma_2))\cdot\xx_{\sigma_2}\] in $\kk(\Delta)_d$ or in $\kk(\Delta,\partial\Delta)_d$, respectively. 
\end{lem}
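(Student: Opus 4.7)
My plan is to induct on the distance between $\sigma_1$ and $\sigma_2$ in the dual graph of $\Delta$ (whose vertices are the facets, with an edge between two facets meeting in a $(d-2)$-face). This graph is connected because $\Delta$ is a pseudomanifold; in the ball case one can further insist that the connecting path crosses only interior ridges. Observe that $\kk(\Delta;\Theta)_d$, respectively $\kk(\Delta,\partial\Delta;\Theta)_d$, is one-dimensional by Theorem~\ref{thm:algebraic property}(b) together with Gorenstein duality, respectively by Proposition~\ref{prop:pairing}. It therefore suffices to prove the claim when $\sigma_1=\tau\cup\{v_1\}$ and $\sigma_2=\tau\cup\{v_2\}$ share a common interior $(d-2)$-face $\tau=\{j_1,\dots,j_{d-1}\}$, and then to check that the sign contributed at each step composes correctly with the orientation hypothesis.

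For such adjacent facets, the key computation is to multiply the l.s.o.p.\ relation $\theta_i=\sum_v a_{iv}x_v=0$ by $\xx_\tau$, obtaining
\[
a_{i,v_1}\xx_{\sigma_1}+a_{i,v_2}\xx_{\sigma_2}+\sum_{j\in\tau}a_{ij}\,x_j^2\,\xx_{\tau\setminus\{j\}}=0,\qquad i=1,\dots,d,
\]
because $x_v\xx_\tau$ vanishes in $\kk(\Delta;\Theta)$ unless $\{v\}\cup\tau\in\Delta$, and the only facets through an interior ridge of a pseudomanifold are $\sigma_1$ and $\sigma_2$. Fixing a generator $\omega$ of the one-dimensional top degree and writing $x_v\xx_\tau=\mu_v\,\omega$, these $d$ equations say exactly that the $(d+1)$-vector $\vec\mu:=(\mu_{v_1},\mu_{v_2},\mu_{j_1},\dots,\mu_{j_{d-1}})\in\kk^{d+1}$ lies in the kernel of the $d\times(d+1)$ matrix
\[
M':=(\boldsymbol{\lambda}_{v_1},\boldsymbol{\lambda}_{v_2},\boldsymbol{\lambda}_{j_1},\dots,\boldsymbol{\lambda}_{j_{d-1}}).
\]
Since $\Theta$ is an l.s.o.p., both $M_\Theta(\sigma_1)$ and $M_\Theta(\sigma_2)$ are non-singular, so $M'$ has rank $d$ and one-dimensional kernel. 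Cramer's rule determines this kernel up to scalar; its first two entries are $\det M_\Theta(\sigma_2)$ and $-\det M_\Theta(\sigma_1)$ for the orderings $\sigma_1=(v_1,j_1,\dots,j_{d-1})$ and $\sigma_2=(v_2,j_1,\dots,j_{d-1})$. This yields
\[
\det(M_\Theta(\sigma_1))\,\xx_{\sigma_1}=-\det(M_\Theta(\sigma_2))\,\xx_{\sigma_2},
\]
and a single transposition in the ordering of $\sigma_2$, the one that realises the ``same orientation'' hypothesis across $\tau$, flips the sign and produces the claimed identity.

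The main obstacle I anticipate is not algebraic but conceptual: the non-square-free monomials $x_j^2\xx_{\tau\setminus\{j\}}$ that appear in the expansion of $\theta_i\xx_\tau$ are unknown elements of the top degree, and at first glance they couple $\xx_{\sigma_1}$ and $\xx_{\sigma_2}$ to several other unknowns, obstructing any direct attempt to isolate their ratio. The point of the Cramer's rule viewpoint is precisely that one never needs to compute the coordinates $\mu_j$ for $j\in\tau$; membership in a one-dimensional kernel forces the whole vector $\vec\mu$ up to scalar, hence pins down the ratio of any two of its entries. After this, the only remaining work is to confirm that the orientation-matching signs compose transitively along the induction path, which is routine, and to observe that the ball case requires no new ideas: Proposition~\ref{prop:pairing} supplies one-dimensionality of $\kk(\Delta,\partial\Delta;\Theta)_d$, and restricting to paths through interior ridges keeps every step of the argument valid.
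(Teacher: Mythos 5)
The paper does not reprove this lemma; it cites it directly from \cite[Corollary~4.5]{PP20}, so there is no in-paper proof to compare against. Your argument is correct and, as far as I can reconstruct it, is essentially the same strategy used by Papadakis and Petrotou: reduce to adjacent facets across an interior ridge $\tau$, expand the $d$ relations $\theta_i\xx_\tau=0$, and observe that although unknown top-degree monomials $x_j^2\xx_{\tau\setminus\{j\}}$ appear, the whole vector of coordinates $\mu_v$ sits in the one-dimensional kernel of the $d\times(d+1)$ matrix $(\boldsymbol\lambda_{v_1},\boldsymbol\lambda_{v_2},\boldsymbol\lambda_{j_1},\dots,\boldsymbol\lambda_{j_{d-1}})$, so Cramer's rule pins down the ratio $\mu_{v_1}:\mu_{v_2}$ without ever computing the other entries; then propagate along the dual graph. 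The one point you state but do not justify is the connectivity claim: for the sphere case the dual graph is connected because a homology sphere is a pseudomanifold, and for the ball case one needs that a homology ball is a pseudomanifold \emph{with boundary}, i.e.\ strongly connected through interior ridges (this is standard, a consequence of Cohen--Macaulayness together with $\dim_\kk\kk(\Delta,\partial\Delta)_d=1$, but is worth a sentence or a citation since the whole induction hinges on it). The sign bookkeeping — that the kernel vector yields $\det(M_\Theta(\sigma_1))\xx_{\sigma_1}=-\det(M_\Theta(\sigma_2))\xx_{\sigma_2}$ for the two orderings $(v_1,j_1,\dots,j_{d-1})$ and $(v_2,j_1,\dots,j_{d-1})$, which differ in orientation, so that one transposition restores the stated equality — is handled correctly.
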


When $\Delta$ is a $(d-1)$-dimensional $\kk$-homology sphere or ball, $\kk(\Delta)_d$ or $\kk(\Delta,\partial\Delta)_d$ respectively, is an $1$-dimensional $\kk$-vector space, which is spanned by a facet. So each facet $\sigma\in\Delta$ defines a map \[\Psi_\sigma:\kk(\Delta)_d\text{ or } \kk(\Delta,\partial\Delta)_d\to \kk\]
such that for all $\alpha$ in $\kk(\Delta)_d$ or $\kk(\Delta,\partial\Delta)_d$,
\[\alpha=\Psi_\sigma(\alpha)\det(M_\Theta(\sigma))\xx_{\sigma}.\]
Lemma \ref{lem:generator} says that $\Psi_\sigma=\pm\Psi_\tau$ for  any two facets $\sigma,\tau\in\Delta$.
If we fix an orientation on $\Delta$, this map is independent of the choice of the oriented facet, giving a \emph{canonical function} $\Psi_\Delta:\kk(\Delta)_d\text{ or } \kk(\Delta,\partial\Delta)_d\to \kk$ (see \cite[Remark 4.6]{PP20}). In particular, if $\sigma$ is a facet of $\Delta$, then $\Psi_\Delta(\xx_\sigma)=1/\det(M_\Theta(\sigma))$.

To state Lee's formula, we will need the following notation. Under the assumption of Lemma \ref{lem:generator}, let $\mathbf{a}=(a_1,\dots,a_d)^T\in\kk^d$ be a vector such that every $d\times d$ minor of the  matrix $(M_\Theta\mid\mathbf{a})$ is nonsingular. For any ordered subset $I\subset[m]$ with $|I|=d$, let $A_I=\det(M_\Theta(I))$, and for any $i\in I$, denote by $A_I(i)$ the determinant of the matrix obtained from $M_\Theta(I)$ by replacing the column vector $\boldsymbol{\lambda}_i$ with $\mathbf{a}$.

\begin{thm}[Lee {\cite[Theorem 11]{Lee96}}]\label{thm:Lee}
	Let $\Delta$ be a $(d-1)$-dimensional $\kk$-homology sphere (resp. $\kk$-homology  ball), and fix an orientation on $\Delta$. Then for a monomial $x_{i_1}^{r_1}\cdots x_{k}^{r_k}\in\kk(\Delta)_d$ (resp.  $x_{i_1}^{r_1}\cdots x_{k}^{r_k}\in\kk(\Delta,\partial \Delta)_d$), $r_i>0$, we have
	\[\Psi_\Delta(x_{i_1}^{r_1}\cdots x_{k}^{r_k})=\sum_{F\in\FF_{d-1}(\mathrm{st}_\sigma\Delta)}\frac{\prod_{i\in\sigma}A_F(i)^{r_i-1}}{A_F\prod_{i\in F\setminus\sigma}A_F(i)},\]
	where $\sigma=\{i_1,\dots,i_k\}$ and the sum is over all ordered facets of $\mathrm{st}_\sigma\Delta$, which are compatible with the given orientation.
\end{thm}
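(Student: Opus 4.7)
The plan is to verify Lee's formula by showing that its right-hand side, as a $\kk$-linear functional on $\kk[\Delta]_d$, descends to the one-dimensional quotient $\kk(\Delta)_d$ (respectively $\kk(\Delta,\partial\Delta)_d$) and matches $\Psi_\Delta$ there. Since $\Psi_\Delta$ is determined by its value on any single facet monomial together with this descent property, two things need to be checked: first, that the formula reproduces $\Psi_\Delta(\xx_F)=1/A_F$ when $\sigma=F$ is a facet; and second, that the $\kk$-linear extension $\Phi$ of the right-hand side annihilates $\theta_\ell\cdot m'$ for every $\theta_\ell\in\Theta$ and every $m'\in\kk[\Delta]_{d-1}$.

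The first point is immediate, since when $\sigma=F$ the sum collapses to the single term $1/A_F$. For the second, I would take $m'=\prod_{i\in\sigma'}x_i^{s_i}$ with support $\sigma'\in\Delta$ and $\sum s_i=d-1$, expand $\sum_v a_{\ell v}\Phi(x_v m')$, and group the contributions by facet $F\supseteq\sigma'$ of $\mathrm{st}_{\sigma'}\Delta$. Combining the $v\in\sigma'$ and $v\in F\setminus\sigma'$ terms, the sum telescopes into
\[
\sum_v a_{\ell v}\Phi(x_vm')=\sum_{F\supseteq\sigma'}\frac{\prod_{i\in\sigma'}A_F(i)^{s_i-1}}{A_F\prod_{j\in F\setminus\sigma'}A_F(j)}\sum_{v\in F}a_{\ell v}A_F(v),
\]
and the inner sum equals $a_\ell A_F$ by the Cramer identity coming from $\mathbf{a}=\sum_{v\in F}(A_F(v)/A_F)\boldsymbol{\lambda}_v$. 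After $A_F$ cancels, the whole problem reduces to proving the residual identity
\[
\sum_{F\supseteq\sigma'}\frac{\prod_{i\in\sigma'}A_F(i)^{s_i-1}}{\prod_{j\in F\setminus\sigma'}A_F(j)}=0.
\]

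The main obstacle is this residual vanishing, which I would prove by a residue analysis in the auxiliary vector $\mathbf{a}$. Each summand is a rational function of $\mathbf{a}$ whose only possible poles lie on hyperplanes $H_G=\mathrm{span}(\boldsymbol{\lambda}_v:v\in G)$, where $G=F\setminus\{j\}$ runs over codimension-one subfaces of $F$ containing $\sigma'$. By the homology sphere (resp.\ homology ball) hypothesis, each such $G$ is contained in exactly two facets $F,F''$ of $\mathrm{st}_{\sigma'}\Delta$. Using the Cramer expansion $\mathbf{a}|_{H_G}=\sum_{v\in G}\mu_v\boldsymbol{\lambda}_v$, one finds $A_F(v)|_{H_G}=\mu_v A_F$ for $v\in G$, so that the residues of the two relevant terms at $H_G$ share the common factor $\prod_{i\in\sigma'}\mu_i^{s_i-1}/\prod_{j'\in G\setminus\sigma'}\mu_{j'}$ but carry opposite orientation signs dictated by the cycle condition $\partial\bigl(\sum\epsilon_F[F]\bigr)=0$; thus the two residues cancel. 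Once every pole is removed this way, a degree count (numerator of total degree $d-1-k$, denominator of degree $d-k$ in $\mathbf{a}$) forces the globally regular residual sum to be homogeneous of degree $-1$ in $\mathbf{a}$, hence identically zero, which completes the descent and identifies $\Phi$ with $\Psi_\Delta$ on $\kk(\Delta)_d$.
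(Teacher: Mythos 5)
The paper does not prove this statement; it is quoted directly from Lee \cite[Theorem 11]{Lee96}, so there is no in-paper argument to compare against. Evaluating your proposal on its own merits: it is correct, and it is essentially the standard way to establish Lee's formula. The three-step structure (linear functional $\Phi$ on $\kk[\Delta]_d$, agreement with $\Psi_\Delta$ on a facet monomial, and annihilation of $\theta_\ell\cdot\kk[\Delta]_{d-1}$) is exactly what is needed, since $\kk(\Delta)_d$ (resp.\ $\kk(\Delta,\partial\Delta)_d$) is one-dimensional and spanned by any facet monomial. The telescoping computation via the Cramer identity $\sum_{v\in F}a_{\ell v}A_F(v)=a_\ell A_F$ is right, and it does reduce everything to the residual identity $\sum_{F\supseteq\sigma'}\prod_{i\in\sigma'}A_F(i)^{s_i-1}/\prod_{j\in F\setminus\sigma'}A_F(j)=0$. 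Your residue argument for that identity is sound: each pole of the sum lies along some $H_G$ with $G$ a $(d-2)$-face containing $\sigma'$; for both the sphere case and the ball case (where $\sigma'$ is interior, hence so is $G$) the link of $G$ is $S^0$, so $G$ lies in exactly two facets $F=G\cup\{j\}$, $F''=G\cup\{j''\}$; restriction to $H_G$ gives $A_F(v)|_{H_G}=\mu_v A_F$ for $v\in G$, so the powers of $A_F$ cancel and the residues of the two terms differ only by the factors $1/A_F(j)$ and $1/A_{F''}(j'')$; and the cycle condition forces $A_F(j)=-A_{F''}(j'')$ near $H_G$ when $F$ and $F''$ carry compatible orientations. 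Once all codimension-one poles are removed, a rational function on $\kk^d$ that is regular is polynomial, and homogeneity of degree $-1$ in $\mathbf{a}$ then forces it to vanish, exactly as you say. Since the valid $\mathbf{a}$ (those making every $d\times d$ minor of $(M_\Theta\mid\mathbf{a})$ nonsingular) form a Zariski-dense set, this also shows the right-hand side is independent of $\mathbf{a}$, as the theorem implicitly asserts. The one phrase I would sharpen is ``forces the globally regular residual sum to be homogeneous of degree $-1$, hence identically zero'': homogeneity of degree $-1$ alone does not imply vanishing, but \emph{regularity} (polynomiality) combined with negative degree does, and that is clearly what you intend.
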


\section{Equivalence of anisotropy}\label{sec:equivalence}
As we have seen in subsection \ref{subsec:anisotropy}, if $\mathbb{F}$ is a field of characteristic $2$ and $\Delta$ is a $\mathbb{F}$-homology $(d-1)$-sphere with $m$ vertices, then $\kk(\Delta;\Theta)$ is anisotropic for the field extension
\[\kk:=\mathbb{F}(a_{ij}:1\leq i\leq d,\,1\leq j\leq m)\]
and the l.s.o.p. $\Theta=(\theta_i=\sum_{i=1}^m a_{ij})_{i=1}^d$.
In fact, the field extension in Theorem \ref{thm:PP} can be chosen to be smaller than $\kk$, as we will see below.

Let 
\[\kk'=\mathbb{F}(a_{ij}:1\leq i\leq d,\,d+1\leq j\leq m),\]
and denote by $A$ the $d\times (m-d)$ matrix $(a_{ij})$. One easily sees that there is an l.s.o.p. $\Theta'$ for $\kk'[\Delta]$ such that $M_{\Theta'}=( I_d\mid A)$, where $I_d$ is the $d\times d$ identity matrix.

\begin{lem}\label{lem:aniso equiv}
	Suppose that $\Delta$ is a $\mathbb{F}$-homology ($\mathrm{char}\,\mathbb{F}$ is arbitrary) $(d-1)$-sphere with $m$ vertices. Let $\kk$, $\Theta$ and $\kk'$, $\Theta'$ be as above. 
	Then $\kk(\Delta;\Theta)$ is anisotropic if and only if $\kk'(\Delta;\Theta')$ is anisotropic.
\end{lem}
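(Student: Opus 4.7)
The strategy is to realize $\kk(\Delta;\Theta)$ as a scalar extension of $\kk'(\Delta;\Theta')$ along a purely transcendental extension of fields, and then to verify that anisotropy passes in both directions under such an extension. The equivalence is thus reduced to a change-of-basis argument plus a specialization argument.

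Write $M_\Theta=(B\mid A)$, where $B=(a_{ij})_{1\le i,j\le d}$, and observe that $\det B$ is a nonzero polynomial in the variables $a_{ij}$, so $B$ is invertible over $\kk$. Replacing $\Theta$ by the equivalent system $\widetilde\Theta:=B^{-1}\Theta$, which generates the same ideal in $\kk[\Delta]$, one obtains $M_{\widetilde\Theta}=(I_d\mid C)$ with $C=B^{-1}A$. Since $A=BC$ and the transcendence degree of $\kk$ over $\mathbb{F}$ is $dm=d^2+d(m-d)$, the entries of $B$ together with those of $C$ form a transcendence basis of $\kk/\mathbb{F}$; in particular the entries of $C$ are algebraically independent. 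Set $\kk'':=\mathbb{F}(C)\subset\kk$. The correspondence $c_{ij}\leftrightarrow a_{ij}$ is then a field isomorphism $\kk''\cong\kk'$ carrying $\widetilde\Theta$ to $\Theta'$, while $\kk=\kk''(B)$ is purely transcendental of degree $d^2$ over $\kk''$. Writing $R'':=\kk''[\Delta]/(\widetilde\Theta)\cong\kk'(\Delta;\Theta')$, one obtains an isomorphism of graded $\kk$-algebras
\[
\kk(\Delta;\Theta)=\kk(\Delta;\widetilde\Theta)\;\cong\;\kk\otimes_{\kk''}R''.
\]

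One direction is now routine: since $\kk$ is faithfully flat over $\kk''$, the natural map $R''\hookrightarrow\kk(\Delta;\Theta)$, $u\mapsto u\otimes 1$, is injective and grading-preserving, so anisotropy of $\kk(\Delta;\Theta)$ immediately implies anisotropy of $R''$, hence of $\kk'(\Delta;\Theta')$.

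The converse is the main obstacle, because anisotropy need not pass to arbitrary field extensions (for instance, $x^2+y^2$ becomes isotropic over $\mathbb{Q}(i)/\mathbb{Q}$), and pure transcendence must be used essentially. Fix $\kk''$-bases $\{e_\alpha\}$ of $R''_i$ and $\{f_\gamma\}$ of $R''_{2i}$, with structure constants $e_\alpha e_\beta=\sum_\gamma c_{\alpha\beta}^{\gamma}f_\gamma$ in $\kk''$. A degree-$i$ element $u=\sum_\alpha a_\alpha e_\alpha$ of $\kk(\Delta;\Theta)_i$ then satisfies $u^2=\sum_\gamma P_\gamma(a)f_\gamma$, where $P_\gamma(t)=\sum_{\alpha,\beta}c_{\alpha\beta}^{\gamma}t_\alpha t_\beta\in\kk''[t]$. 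Suppose for contradiction that $u\neq 0$ while $u^2=0$; then the tuple $(a_\alpha)\in\kk^N$ is nonzero and annihilates every $P_\gamma$. Viewing $a_\alpha\in\kk''(B)$ and clearing denominators (using homogeneity of $P_\gamma$), one may assume $a_\alpha\in\kk''[B]$ and not all zero. Since $\kk''$ is infinite, one can choose a specialization $B\mapsto b_0\in(\kk'')^{d^2}$ lying outside the proper Zariski-closed subset on which all $a_\alpha$ vanish. Substituting produces a nonzero tuple $(a_\alpha(b_0))\in(\kk'')^N$ annihilating every $P_\gamma$, contradicting anisotropy of $R''\cong\kk'(\Delta;\Theta')$.
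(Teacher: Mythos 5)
Your proof is correct, and the forward direction is essentially the paper's: conjugate $M_\Theta$ to $(I_d\mid C)$, observe that the entries of $C$ are algebraically independent over $\mathbb{F}$ (since $(B,C)\mapsto(B,BC)$ is birational), and embed $\kk'(\Delta;\Theta')\cong\kk''(\Delta;\widetilde\Theta)$ into $\kk(\Delta;\Theta)$ as a subring. The converse, however, is where you diverge. The paper adjoins the $d^2$ variables one at a time, at each step localizing the coefficient ring at the height-one prime $(a_i)$ or $(a_i-1)$, using the discrete valuation to rescale a hypothetical isotropic element so that its image under the specialization $a_i\mapsto 0$ (or $1$) is still nonzero, and then propagating anisotropy down the tower $\kk'=\kk_0\subset\kk_1\subset\cdots\subset\kk_{d^2}=\kk$ by induction. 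You instead package $\kk(\Delta;\Theta)$ as a scalar extension $\kk\otimes_{\kk''}R''$ along the purely transcendental extension $\kk=\kk''(B)$ and specialize all of $B$ at once: express a purported isotropic $u$ in a $\kk''$-basis, clear denominators using the degree-two homogeneity of the structure quadratics $P_\gamma$, and use Zariski-density of $\kk''$-rational points ($\kk''$ being infinite, as $m>d$ forces $\kk''\cong\kk'$ to be a nontrivial rational function field over $\mathbb{F}$) to evaluate at a point $b_0$ where the coefficient tuple remains nonzero, producing a nonzero isotropic element of $R''_i$. Both arguments rely on the same two pillars — a subring/flatness observation for the easy direction and pure transcendence for the hard one — but yours isolates a clean general principle (anisotropy of a finite-dimensional graded algebra over an infinite field is invariant under purely transcendental base change), whereas the paper's one-variable-at-a-time version keeps every intermediate object an Artinian reduction of the face ring with a concrete l.s.o.p., a feature the paper deliberately reuses in the proofs of Theorems \ref{thm:0-move} and \ref{thm:general char} where the specialization values must be chosen to preserve the l.s.o.p. condition. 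So your streamlining is perfectly valid for this lemma, at the modest cost of being less ready-made for those later reuses.
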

\begin{proof}
	``$\Rightarrow$".	There exists a matrix $N\in GL(d,\kk)$ such that $N M_\Theta=(I_d\mid B)$, where $B=(b_{ij})$ ($1\leq i\leq d,\,d+1\leq j\leq m$) is a $d\times (m-d)$ matrix  with entries $b_{ij}\in\kk$. Denote by $\Theta_0$ the l.s.o.p. corresponding to $(I_d\mid B)$. Clearly, the two ideals generated by $\Theta$ and $\Theta_0$ are the same. Let 
	\[\kk_0=\mathbb{F}(b_{ij}:1\leq i\leq d,\,d+1\leq j\leq m).\]
	Then $\kk_0$ is a subfield of $\kk$. One easily sees that $b_{ij}$ are algebraically independent elements over $\mathbb{F}$, so there is an isomorphism $\kk'\cong\kk_0$ given by $a_{ij}\mapsto b_{ij}$, and then an induced isomorphism $\kk'(\Delta;\Theta')\cong \kk_0(\Delta;\Theta_0)$. Since $\kk_0(\Delta;\Theta_0)\subset \kk(\Delta;\Theta_0)=\kk(\Delta;\Theta)$ 
	and $\kk(\Delta;\Theta)$ is anisotropic, any nonzero element 
	$u\in\kk_0(\Delta;\Theta_0)_i$ with $i\leq d/2$ satisfies $u^2\neq0$. Hence $\kk'(\Delta;\Theta')$ is anisotropic.
	
	``$\Rightarrow$". Pick an arbitrary order on the variables $a_{ij}$ for $1\leq i\leq d$, $1\leq j\leq d$, and rewrite them as $a_1,a_2,\dots,a_{d^2}$. Let $\kk_0=\kk'$, and recursively define $\kk_i=\kk_{i-1}(a_i)$, i.e. the field of fractions of $\kk_{i-1}[a_i]$,  for $1\leq i\leq d^2$. Hence there is a sequence of field extension \[\kk'=\kk_0\subset\kk_1\subset\cdots\subset\kk_{d^2}=\kk.\]
	 Let $\Theta_0=\Theta'$ be the l.s.o.p. for $\kk_0[\Delta]$. For $1\leq i\leq d^2$, if $a_{i}=a_{jk}$, then define an l.s.o.p. $\Theta_i$ for $\kk_i[\Delta]$ such that $M_{\Theta_i}$ is obtained from $M_{\Theta_{i-1}}$ by replacing the $(j,k)$-entry by $a_{jk}$. 
	
	We will prove that $\kk_{i}(\Delta;\Theta_i)$ are all anisotropic for $0\leq i\leq d^2$ by induction on $i$. The base case $i=0$ is just the assumption. For the induction step, set $R_i=\mathbb{F}[a_1,\dots,a_i]$, and denote by $\mathfrak{p}_i$ the prime ideal 
	\[\mathfrak{p}_i=\begin{cases}
		(a_i-1)\ &\text{if  $a_i=a_{kk}$ for some $1\leq k\leq d$},\\
		(a_i)\ &\text{otherwise.}
	\end{cases}\] 
	Then there is a ring homomorphism $\eta_i:(R_i)_{\mathfrak{p}_i}\to \kk_{i-1}$, where $(R_i)_{\mathfrak{p}_i}\subset\kk_i$ denote the localization of $R_i$ at $R-\mathfrak{p}_i$, given by $\eta_i(a_j)=a_j$ for $1\leq j\leq i-1$, and 
	\[\eta_i(a_i)=\begin{cases}
		1\ &\text{ if  $a_i=a_{kk}$ for some $k$},\\
		0\ &\text{ otherwise.}
	\end{cases}\]
	Given a nonzero element $\alpha\in\kk_{i}(\Delta;\Theta_i)_j$ with $j\leq d/2$, there exists a nonzero element $t\in\mathfrak{p}_i$ such that
	\[t\alpha\in(R_i)_{\mathfrak{p}_i}(\Delta;\Theta_i)\ \text{ and }\ 0\neq\eta_i(t\alpha)\in\kk_{i-1}(\Delta;\Theta_{i-1}).\]
	Since $\kk_{i-1}(\Delta;\Theta_{i-1})$ is anisotropic by induction, $(\eta_i(t\alpha))^2\neq0$. It follows that $t^2\alpha^2$ is not zero in $(R_i)_{\mathfrak{p}_i}(\Delta;\Theta_i)$, and then $0\neq\alpha^2\in\kk_{i}(\Delta;\Theta_i)$. So $\kk_{i}(\Delta;\Theta_i)$ is anisotropic.
\end{proof}

As an application of Lemma \ref{lem:aniso equiv}, we can obtain a result in \cite{PP20}. That is, every simplicial $1$-sphere is generically anisotropic over any field. Note that every simplicial $1$-sphere is obtained from $\partial \Delta^2$ by a sequence of bistellar $0$-moves, so this result is a corollary of the following more general theorem.

\begin{thm}\label{thm:0-move}
	Let $\Delta$ be a $\mathbb{F}$-homology $(d-1)$-sphere and suppose that $\Delta'$ is obtained from $\Delta$ via a bistellar $0$-move. Then
	$\Delta$ is generically anisotropic over $\mathbb{F}$ if and only if $\Delta'$ is  generically anisotropic over $\mathbb{F}$.
\end{thm}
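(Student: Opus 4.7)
The plan is to reduce both sides to Artinian reductions with carefully chosen l.s.o.p.s, then construct an explicit algebra map relating them. Let $\sigma = \{v_1, \ldots, v_d\}$ be the facet of $\Delta$ subdivided by the move and $v$ the new vertex of $\Delta'$. Using Lemma~\ref{lem:aniso equiv}, I take generic l.s.o.p.s with matrices $M_\Theta = (I_d \mid B)$ for $\Delta$ and $M_{\Theta^*} = (I_d \mid B \mid \boldsymbol{\lambda}_v)$ for $\Delta'$, where the columns of $I_d$ are indexed by $\sigma$ and $\boldsymbol{\lambda}_v = (t_1, \ldots, t_d)^T$ introduces $d$ new transcendentals. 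Set $\kk = \mathbb{F}(B)$ and $\kk^* = \kk(t_1, \ldots, t_d)$, so that $\theta_i^* = \theta_i + t_i x_v$. The central tool will be the $\kk^*$-algebra map $\phi: \kk^*(\Delta;\Theta) \to \kk^*(\Delta';\Theta^*)$ defined by $\phi(x_j) = x_j$ for $j \notin \sigma$ and $\phi(x_{v_i}) = x_{v_i} + t_i x_v$. It is well-defined because every non-face of $\Delta$ contains a vertex outside $\sigma$ (as $\sigma$ is a facet of $\Delta$), so each term in the expansion of $\phi$ applied to a non-face monomial of $\Delta$ vanishes---either as the original non-face monomial or via the relation $x_v x_j = 0$ for $j \notin \sigma$ in $\kk^*[\Delta']$---and $\phi(\theta_i) = \theta_i^*$ is direct.

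Two properties of $\phi$ drive the argument. First, $\phi$ is injective, since its composition with the quotient $\kk^*(\Delta';\Theta^*) \to \kk^*(\Delta';\Theta^*)/(x_v) \cong \kk^*(\Delta;\Theta)/(x_\sigma)$ is the natural projection, placing $\ker\phi$ inside the one-dimensional socle, while a direct expansion shows that $\phi(x_\sigma)$ is a nonzero scalar multiple of $x_v^d$. Second, $\phi(y) \cdot x_v = 0$ for every $y \in \kk^*(\Delta;\Theta)$ of positive degree, because $x_v x_j = 0$ for $j \notin \sigma$ and $x_v(x_{v_i} + t_i x_v) = x_v x_{v_i} + t_i x_v^2 = 0$, using $x_v x_{v_i} = -t_i x_v^2$ derived from $\theta_i^* = 0$. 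These yield $\kk^*(\Delta';\Theta^*) = \mathrm{im}(\phi) + (x_v)$, where $(x_v)$ is a $d$-dimensional ideal whose degree-$k$ component is $\kk^* \cdot x_v^k$ for $1 \le k \le d$. The reverse implication is then immediate: for nonzero $u \in \kk(\Delta;\Theta)_i$ with $i \le d/2$, injectivity of $\phi$ and faithful flatness of $\kk^*/\kk$ produce $\phi(u\otimes 1) \ne 0$, anisotropy of $\Delta'$ gives $\phi(u^2 \otimes 1) \ne 0$, and faithful flatness again yields $u^2 \ne 0$. For the forward implication, decompose $u \in \kk^*(\Delta';\Theta^*)_i$ as $u = \phi(u_0) + c \cdot x_v^i$; the annihilation property collapses the cross term, giving $u^2 = \phi(u_0^2) + c^2 x_v^{2i}$. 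When $2i < d$ the decomposition is a direct sum in degree $2i$, so the two summands vanish separately and generic anisotropy of $\Delta$ forces $u_0 = 0$ and $c = 0$.

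The principal obstacle is the case $2i = d$ (only arising when $d$ is even), where both $\phi(u_0^2)$ and $c^2 x_v^d$ lie in the one-dimensional socle $\kk^*(\Delta';\Theta^*)_d$. Using the telescoping identity $x_v^{|S|}\prod_{i \notin S} x_{v_i} = (-1)^{d-|S|}(\prod_{i \notin S} t_i) \cdot x_v^d$ (a consequence of $x_v x_{v_i} = -t_i x_v^2$) together with Lee's formula (Theorem~\ref{thm:Lee}) applied to $x_v^d$, I will compute $\Psi_{\Delta'}(x_v^d) = \epsilon/\prod_i t_i$ for some sign $\epsilon = \pm 1$ and $\Psi_{\Delta'}(\phi(x_\sigma)) = (-1)^{d+1}\epsilon$, the latter via the inclusion--exclusion identity $\sum_{S \ne \emptyset}(-1)^{d-|S|} = (-1)^{d+1}$. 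Anisotropy of $\Delta'$ at degree $d/2$ therefore reduces to showing that the equation $\Psi_\Delta(u_0^2) = c^2/\prod_i t_i$ has no solution with $(u_0, c) \ne (0, 0)$ in $\kk^*(\Delta;\Theta)_{d/2} \oplus \kk^*$. Writing $u_0 = \sum_k c_k e_k$ in a $\kk$-basis of $\kk(\Delta;\Theta)_{d/2}$ and $c_k = p_k/r$ with $p_k, r \in \kk[t_1, \ldots, t_d]$, clearing denominators turns a putative nontrivial solution into a polynomial identity of the form $\prod_i t_i \cdot \sum_{k,l} p_k p_l q_{kl} = r^2$, where $q_{kl} = \Psi_\Delta(e_k e_l) \in \kk$. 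Specializing each $t_j \to 0$ and invoking the anisotropy of the quadratic form $Q(u_0) := \Psi_\Delta(u_0^2)$ over the purely transcendental extension $\kk(t_1, \ldots, \wh{t_j}, \ldots, t_d)$ (inherited from the anisotropy of $Q$ over $\kk$) forces $t_j$ to divide each $p_k$ and $r$ to arbitrarily high order, which is impossible for polynomials of bounded degree. This infinite-descent argument is the crux of the forward direction.
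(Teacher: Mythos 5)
Your proposal is correct, and it takes a genuinely different route from the paper's proof. You construct an explicit $\kk^*$-algebra map $\phi:\kk^*(\Delta;\Theta)\to\kk^*(\Delta';\Theta^*)$ sending $x_{v_i}\mapsto x_{v_i}+t_ix_v$; this cleanly encodes the bistellar $0$-move at the level of Artinian reductions and gives the structural decomposition $\kk^*(\Delta';\Theta^*)=\mathrm{im}(\phi)\oplus(x_v)$ in degrees $1,\dots,d-1$, with $(x_v)_k=\kk^*\cdot x_v^k$. The annihilation identity $\phi(y)\cdot x_v=0$ for $\deg y\geq1$ makes cross terms in $u^2$ vanish automatically. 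The paper instead works concretely with a basis $\{\sigma_1,\dots,\sigma_s\}$ of $\kk'(\Delta')_n$ with $\sigma_1\ni m$ and $\sigma_i\in\Delta'\setminus\mathrm{st}_{\{m\}}\Delta'$ for $i\geq2$; the combinatorial fact $\xx_{\sigma_1}\xx_{\sigma_i}=0$ plays the role of your annihilation identity, and the argument proceeds via Lee's formula plus a single specialization $a_{m,d}\mapsto 0$ (order-of-vanishing comparison) rather than your clearing of denominators and descent. Both proofs isolate the middle degree $d/2$ as the only subtle case and resolve it by a degree/specialization comparison with the $\prod t_i$-pole of $\Psi_{\Delta'}(x_v^d)$; your map $\phi$ makes the structural side cleaner, while the paper's argument is shorter because the single-specialization step avoids the descent. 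Two points in your write-up deserve a sentence of justification in a final version: (i) $x_v^d\neq 0$, used in the injectivity of $\phi$, which follows from $x_v^d$ being a nonzero scalar multiple of the facet monomial $\xx_{\{v\}\cup\sigma\setminus\{v_1\}}$ via $x_{v_i}x_v=-t_ix_v^2$; and (ii) the claim that the anisotropic quadratic form $Q(u_0)=\Psi_\Delta(u_0^2)$ over $\kk$ remains anisotropic over $\kk(t_1,\dots,\wh{t_j},\dots,t_d)$, which requires a specialization argument using that $\kk$ is infinite (this is where the infinite field hypothesis is essential and matches the role of $\eta$ in the paper's and in Lemma~\ref{lem:aniso equiv}'s proofs).
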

\begin{proof}
	Suppose that $\FF_0(\Delta)=[m-1]$, $\FF_0(\Delta')=[m]$, and without loss of generality assume that  $\Delta'=\chi_\sigma\Delta$ for a facet $\sigma=[d]$.  Set 
	\begin{gather*}
		\kk=\mathbb{F}(a_{ij}:1\leq i\leq d,\,d+1\leq j\leq m-1),\\
		\kk'=\mathbb{F}(a_{ij}:1\leq i\leq d,\,d+1\leq j\leq m).
	\end{gather*}
Denote by $\Theta$  the l.s.o.p. for $\kk'[\Delta']$ such that $M_{\Theta}=(I_d\mid A)$, where $A=(a_{ij})$ is the $d\times(m-d)$ matrix. The restriction of $\Theta$ to $\kk[\Delta]$ is also denoted $\Theta$, and we will omit it from the notation of artinian reductions of face rings.

According to Lemma \ref{lem:aniso equiv}, it suffices to show that $\kk(\Delta)$ is anisotropic if and only if $\kk'(\Delta')$ is anisotropic. 
One direction is easy, since 
\[\kk(\Delta)_{\geq 1}=\kk(\Delta,\sigma)_{\geq 1}=\kk(\Delta',\mathrm{st}_{\{m\}}\Delta')_{\geq 1}\subset \kk'(\Delta')_{\geq 1}.\] 

The other direction takes more thought. Suppose that $\kk(\Delta)$ is anisotropic. Let $\kk_0=\kk$, and recursively define $\kk_i=\kk_{i-1}(a_{m,i})$ for $1\leq i\leq d$. Then $\kk=\kk_0\subset\kk_1\subset\cdots\subset\kk_{d}=\kk'$ is a sequence of field extension. Arguing as in the proof of Lemma \ref{lem:aniso equiv}, one can show that $\kk_i(\Delta)$ are all anisotropic for $0\leq i\leq d$.

We first consider the case that $\dim\Delta=d-1$ is odd.  Set $n=d/2$ and $\sigma_1=[n-1]\cup\{m\}$. Then $\kk'(\Delta')_n$ has a basis of the form: $\{\sigma_1,\sigma_2,\dots,\sigma_s\}$, where $s=h_n(\Delta')$, $\sigma_i\in\Delta'\setminus\mathrm{st}_{\{m\}}\Delta'$ for $2\leq i\leq s$. This follows from the following short exact sequence:
\[0\to\kk'(\Delta',\mathrm{st}_{\{m\}}\Delta')\to\kk'(\Delta')\to\kk'(\mathrm{st}_{\{m\}}\Delta')\to0.\]
Write a nonzero element $\alpha\in\kk'(\Delta')_n$ as $\alpha=\sum_{i=1}^s l_i\xx_{\sigma_i}$, $l_i\in\kk'$. If $l_1=0$, then $\alpha\in\kk'(\Delta',\mathrm{st}_{\{m\}}\Delta')=\kk'(\Delta,\sigma)\subset\kk'(\Delta)$, thus $\alpha^2\neq 0$ since $\kk'(\Delta)$ is anisotropic by the previous paragraph. 
On the other hand, if $l_1\neq 0$, we may assume $l_1=1$. Then, since $\xx_{\sigma_1}\xx_{\sigma_i}=0$ for all $2\leq i\leq s$, we have
\begin{equation}\label{eq:square}
\alpha^2=\xx_{\sigma_1}^2+(l_2\xx_{\sigma_2}+\cdots+l_s\xx_{\sigma_s})^2.
\end{equation}	
An easy calculation shows that in $\kk'(\Delta')_d$, \[\xx_{\sigma_1}^2=-\frac{\prod_{i=1}^{n-1}a_{m,i}}{\prod_{i=n}^{d-1}a_{m,i}}x_1x_2\cdots x_{d-1}x_{m}.\]
Hence $\Psi_{\Delta'}(\xx_{\sigma_1}^2)=\pm \prod_{i=1}^{n-1}a_{m,i}/\prod_{i=n}^{d}a_{m,i}$ by the definition of canonical function. 
Set $\beta=l_2\xx_{\sigma_2}+\cdots+l_s\xx_{\sigma_s}$. If $\alpha^2=0$, then by \eqref{eq:square},
\begin{equation}\label{eq:fraction}
\Psi_{\Delta'}(\beta^2)=\Psi_{\Delta}(\beta^2)=\pm\frac{\prod_{i=1}^{n-1}a_{m,i}}{\prod_{i=n}^{d}a_{m,i}}.
\end{equation}
Let $R=\kk[a_{m,1},\dots,a_{m,d}]$, and let $\mathfrak{p}$ be the prime ideal $(a_{m,d})\subset R$. Then as in the proof of Lemma \ref{lem:aniso equiv}, there is a ring homomorphism $\eta:R_\mathfrak{p}\to \kk_{d-1}$ given by $\eta(a_{m,i})=a_{m,i}$ for $1\leq i\leq d-1$ and  $\eta(a_{m,d})=0$. 
Since $\Psi_{\Delta'}(\xx_{\sigma_i}\xx_{\sigma_j})\in\kk$ for all $2\leq i,j\leq s$, it follows from \eqref{eq:fraction} that there exits a power $b=a_{m,d}^k$ ($k\in\Zz^+$) such that $bl_i\in R_\mathfrak{p}$ for all $2\leq i\leq s$ and $\eta(bl_j)\neq 0$ for some $j$. So \[0\neq\eta(b\beta)\in\kk_{d-1}(\Delta',\mathrm{st}_{\{m\}}\Delta')\subset\kk_{d-1}(\Delta),\] 
and then $\eta(b^2\beta^2)=(\eta(b\beta))^2\neq0$, since $\kk_{d-1}(\Delta)$ is anisotropic. This implies that 
$\Psi_\Delta\eta(b^2\beta^2)=\eta(b^2\Psi_\Delta(\beta^2))\neq 0$. However, the equation \eqref{eq:fraction}, together with the fact that $a_{m,d}^2\mid b^2$, gives $\eta(b^2\Psi_\Delta(\beta^2))=0$, a contradiction. 
Thus we get the anisotropy of $\kk'(\Delta')$ in degree $n$.
If $0\neq\alpha\in\kk'(\Delta')_i$ for $i<n$, then there exits $\alpha'\in\kk'(\Delta')_{n-i}$ such that $0\neq\alpha\alpha'\in\kk'(\Delta')_n$.
This is because $\kk'(\Delta')$ is Gorenstein. So we reduce to the case when $i=n$.

For the case that $\dim\Delta=d-1$ is even, set $n=(d-1)/2$. Then one can similarly show that $\kk'(\Delta')_n$ has a basis of the form: $\{\sigma_1,\sigma_2,\dots,\sigma_s\}$, where $s=h_n(\Delta')$, $\sigma_1=[n-1]\cup\{m\}$ and $\sigma_i\in\Delta'\setminus\mathrm{st}_{\{m\}}\Delta'$ for $2\leq i\leq s$.
As before, it suffices to verify that $\alpha^2\neq0$ for any nonzero element $\alpha=\sum_{i=1}^{s}l_i\xx_{\sigma_i}\in\kk'(\Delta')_n$ with $l_1=1$. Note that $x_m\alpha^2=x_m\xx_{\sigma_1}^2$, and 
\[\Psi_{\Delta'}(x_m\xx_{\sigma_1}^2)=\pm\frac{\prod_{i=1}^{n-1}a_{m,i}}{\prod_{i=n}^{d}a_{m,i}}\neq 0.\]
Then the statement follows immediately.
\end{proof}
Here is another application of Lemma \ref{lem:aniso equiv}, which shows that Theorem \ref{thm:PP} reduces to odd dimensional spheres.

\begin{thm}\label{thm:odd}
	Let $\Delta$ be a $\mathbb{F}$-homology sphere of dimension $2(n-1)$, and denote by $S\Delta:=\partial\Delta^1*\Delta$ the suspension of $\Delta$. 
If $S\Delta$ is generically anisotropic over $\mathbb{F}$, then $\Delta$ is also generically anisotropic over $\mathbb{F}$.
\end{thm}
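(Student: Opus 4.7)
The approach is to exhibit an anisotropic Artinian reduction of $\kk[\Delta]$ derived from a reduced-form l.s.o.p.\ for $\kk[S\Delta]$, by eliminating the suspension vertices $v,w$ and exploiting the relation $vw=0$. This will express $\kk(S\Delta;\Theta^S)$ as a quotient $R/(\gamma_1\gamma_2)$, where $R$ is a Krull-dimension-one quotient of $\kk[\Delta]$ and $\gamma_1,\gamma_2\in\kk[\Delta]_1$ are generic linear forms; the anisotropy is then transferred from $R/(\gamma_1\gamma_2)$ to $R/(\gamma_1)$ (an Artinian reduction of $\kk[\Delta]$) via a short algebraic trick.

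By Lemma~\ref{lem:aniso equiv}, I may replace the generic l.s.o.p.\ for $\kk[S\Delta]$ by the reduced-form l.s.o.p.\ $\Theta^S=(\theta_1,\ldots,\theta_{2n})$ with identity columns at $\{v,w,x_1,\ldots,x_{2n-2}\}$: thus $\theta_1=v+\gamma_1$, $\theta_2=w+\gamma_2$ with $\gamma_1,\gamma_2\in\kk[x_{2n-1},\ldots,x_m]_1$ having algebraically independent transcendental coefficients, and $\theta_{k+2}=x_k+\alpha_k$ for $k=1,\ldots,2n-2$ with $\alpha_k$ similarly generic. Using the decomposition $\kk[S\Delta]=\kk[\Delta]\otimes_\kk\kk[v,w]/(vw)$ and substituting $v=-\gamma_1$, $w=-\gamma_2$, the relation $vw=0$ becomes $\gamma_1\gamma_2=0$, yielding
\[
\kk(S\Delta;\Theta^S)=R/(\gamma_1\gamma_2),\qquad R:=\kk[\Delta]/(\theta_3,\ldots,\theta_{2n}).
\]
A facet-by-facet determinant computation shows that each of $\gamma_1,\gamma_2$ completes the partial l.s.o.p.\ $(\theta_3,\ldots,\theta_{2n})$ to a full l.s.o.p.\ for $\kk[\Delta]$; Cohen--Macaulayness of $\kk[\Delta]$ then implies $\gamma_1,\gamma_2$ and $\gamma_1\gamma_2$ are all non-zero-divisors on $R$. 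In particular $\Theta^\Delta:=(\gamma_1,\theta_3,\ldots,\theta_{2n})$ is an l.s.o.p.\ for $\kk[\Delta]$ with Artinian reduction $\kk(\Delta;\Theta^\Delta)=R/(\gamma_1)$.

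The key step is as follows. Take a nonzero $u\in\kk(\Delta;\Theta^\Delta)_i$ with $i\le n-1=(\dim\Delta)/2$, and suppose for contradiction that $u^2=0$. Lift $u$ to $\tilde u\in R_i$; then $\tilde u^2\in\gamma_1 R$, so write $\tilde u^2=\gamma_1 y$ with $y\in R_{2i-1}$. Because $\gamma_2$ is a non-zero-divisor on $R$ and $\tilde u\notin\gamma_1 R$, the element $\gamma_2\tilde u$ is nonzero in $R/(\gamma_1\gamma_2)$. On the other hand,
\[
(\gamma_2\tilde u)^2=\gamma_2^2\tilde u^2=\gamma_2^2\gamma_1 y=(\gamma_1\gamma_2)(\gamma_2 y)=0\ \text{in}\ R/(\gamma_1\gamma_2).
\]
Since $\deg(\gamma_2\tilde u)=i+1\le n=(\dim S\Delta+1)/2$, the assumed anisotropy of $\kk(S\Delta;\Theta^S)=R/(\gamma_1\gamma_2)$ forces $(\gamma_2\tilde u)^2\neq 0$, contradicting the display. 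Hence $u^2\neq 0$, so $\kk(\Delta;\Theta^\Delta)$ is anisotropic.

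To pass from anisotropy of $\kk(\Delta;\Theta^\Delta)$ to the generic anisotropy of $\Delta$ over $\mathbb{F}$, note that the coefficients of $\Theta^\Delta$ form an algebraically independent subset of size $(2n-1)(m-2n+2)$ of the generators of $\kk$ (namely the $a_{1,j}$ and $a_{k+2,j}$ for $j\ge 2n-1$). A further row reduction using $a_{1,2n-1}$ as pivot brings $\Theta^\Delta$ to the reduced form $(I_{2n-1}\mid A')$ for $\kk[\Delta]$, and a fiber-dimension count shows that the $(2n-1)(m-2n+1)$ entries of $A'$ are algebraically independent over $\mathbb{F}$; writing $\kk_0\subset\kk$ for the subfield they generate (a purely transcendental extension), anisotropy of $\kk_0(\Delta;(I_{2n-1}\mid A'))\subset\kk(\Delta;\Theta^\Delta)$ is inherited from the larger ring, and Lemma~\ref{lem:aniso equiv} applied to $\Delta$ identifies this with the generic anisotropy of $\Delta$. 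The main obstacle will be cleanly executing the pivotal identity $(\gamma_2\tilde u)^2=0$ combined with $\gamma_2\tilde u\neq 0$, and in verifying the regular-pair structure of $(\gamma_1,\gamma_2)$ on $R$—both consequences of the generic extendability of partial linear systems of parameters in the Cohen--Macaulay face ring of a $\kk$-homology sphere.
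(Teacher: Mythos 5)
Your argument is correct, and it takes a genuinely different route from the paper. The paper works with the cone $K=\{v\}*\Delta$ and uses two ingredients: the standard cone isomorphisms $\kk(K;\Theta)\cong\kk(\Delta;\Theta_0)$ and $\kk(\Delta;\Theta_0)_*\cong\kk(K,\Delta;\Theta)_{*+1}$ (multiplication by $x_v$), together with the canonical-module perfect pairing for homology balls (Proposition~\ref{prop:pairing}); given $0\neq\alpha\in\kk_0(\Delta;\Theta_0)_i$ it passes to $0\neq(x_v\alpha)^2$ inside $\kk(S\Delta;\Theta)$ and then back to $\alpha^2\neq0$ via the pairing. You instead eliminate $v,w$ to obtain the explicit presentation $\kk(S\Delta;\Theta^S)=R/(\gamma_1\gamma_2)$ with $R=\kk[\Delta]/(\theta_3,\ldots,\theta_{2n})$ one-dimensional Cohen--Macaulay, identify $\kk(\Delta;\Theta^\Delta)=R/(\gamma_1)$, and run a pure regular-sequence argument (lift $\tilde u$, multiply by the non-zero-divisor $\gamma_2$, contradict anisotropy of $R/(\gamma_1\gamma_2)$). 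Both arguments hinge on the same degree shift $i\le n-1\mapsto i+1\le n$; yours buys a more elementary and self-contained verification that avoids Proposition~\ref{prop:pairing} and the cone isomorphisms entirely, at the cost of having to verify the regular-pair structure of $(\gamma_1,\gamma_2)$ on $R$ and the algebraic independence of the row-reduced matrix entries. Two small points worth stating precisely in a write-up: (i) the fact that $(\gamma_1,\theta_3,\ldots,\theta_{2n})$ and $(\gamma_2,\theta_3,\ldots,\theta_{2n})$ are l.s.o.p.s for $\kk[\Delta]$ follows from the l.s.o.p.\ condition for $\Theta^S$ on facets of $\{w\}*\Delta$ and of $\{v\}*\Delta$ respectively (expansion along the identity column for $v$, resp.\ $w$), so it is automatic rather than needing a separate generic argument; and (ii) the ``fiber-dimension count'' for algebraic independence of the entries of $A'$ is most cleanly phrased as a birational change of coordinates $(c_{i,j})\mapsto(b_{i,j},c_{i,2n-1})$ on the transcendence basis, as the entries $b_{i,j}$ of $A'$ together with the pivot column $c_{i,2n-1}$ recover the original coefficients $c_{i,j}$ of $\Theta^\Delta$.
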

\begin{proof}
	Suppose that $\FF_0(\Delta)=[m]$, and write $S\Delta=K\cup K'$, where $K=\{v\}*\Delta$ and $K'=\{v'\}*\Delta$. Let $\kk$ be the rational function field 
	\[\mathbb{F}(a_{ij}:1\leq i\leq 2n,\,1\leq j\leq m),\]
	and let $\kk_0\subset\kk$ be the subfield 
	\[\mathbb{F}(a_{ij}:2\leq i\leq 2n,\,1\leq j\leq m).\]
	Choose an l.s.o.p. $\Theta=(\theta_1,\theta_2,\dots,\theta_{2n})$ for $\kk[S\Delta]$ such that $M_\Theta=(A\mid\boldsymbol{\lambda}_v,\boldsymbol{\lambda}_{v'})$, where $A=(a_{ij})$, and $(\boldsymbol{\lambda}_v,\boldsymbol{\lambda}_{v'})=(I_2\mid 0)^T$.
	 Let $\Theta_0=(\theta_2,\theta_3,\dots,\theta_{2n})$. Clearly, $\Theta_0$ restricted to $\Delta$ is an l.s.o.p. for $\kk[\Delta]$.
	It is known that there are two isomorphisms:
	\begin{gather*}
		\kk(K;\Theta)\cong\kk(\Delta;\Theta_0),\ x_i\mapsto x_i\text{ for }1\leq i\leq m,\ x_v\mapsto x_v-\theta_{1};\\
		\kk(\Delta;\Theta_0)_*\cong\kk(K,\Delta;\Theta)_{*+1},\ \alpha\mapsto x_v\alpha
	\end{gather*}
	(see e.g. \cite[Lemma 3.2 and 3.3]{A18}). Hence for a nonzero element $\alpha\in\kk(\Delta;\Theta_0)$, we have $0\neq x_v\alpha\in\kk(K,\Delta;\Theta)$. Note that $K$ is a homology ball with boundary $\Delta$.
	
	Assume  $S\Delta$ is generically anisotropic over $\mathbb{F}$. Then $\kk(S\Delta;\Theta)$ is anisotropic by the proof of Lemma \ref{lem:aniso equiv}. For any nonzero element $\alpha\in\kk_0(\Delta;\Theta_0)_i\subset \kk(\Delta;\Theta_0)_i$, $i\leq n-1$, the second isomorphism above shows that $0\neq x_v\alpha\in\kk(K,\Delta;\Theta)_{i+1}$.  Hence we have  
	$0\neq(x_v\alpha)^2\in\kk(K,\Delta;\Theta)$, since $\kk(K,\Delta;\Theta)=\kk(S\Delta,K';\Theta)\subset\kk(S\Delta;\Theta)$ and $\kk(S\Delta;\Theta)$ is anisotropic. 
	By Proposition \ref{prop:pairing}, this means that $x_v\alpha^2$ is not zero in $\kk(K;\Theta)$, and then $0\neq\alpha^2\in\kk_0(\Delta;\Theta_0)\subset \kk(\Delta;\Theta_0)$ because of the above isomorphism $\kk(K;\Theta)\cong\kk(\Delta;\Theta_0)$. So $\kk_0(\Delta;\Theta_0)$ is anisotropic. This is equivalent to saying that $\Delta$ is generically anisotropic over $\mathbb{F}$.
\end{proof}

\section{A proof of Theorem \ref{thm:PP} for PL-spheres}\label{sec:char 2}
In this section $\mathbb{F}$ denotes a field of characteristic $2$.

By Theorem \ref{thm:odd} and  Theorem \ref{thm:pachner}, if we can show that the characteristic $2$ anisotropy of odd dimensional PL-spheres is preserved by bistellar moves, then Theorem \ref{thm:PP} holds for all PL-spheres. 

\begin{thm}\label{thm:bistellar}
	Let $\Delta$ be a $\mathbb{F}$-homology $(2n-1)$-sphere and suppose that $\Delta'$ is obtained from $\Delta$ via a bistellar $q$-move. Then
	$\Delta$ is generically anisotropic over $\mathbb{F}$ if and only if $\Delta'$ is  generically anisotropic over $\mathbb{F}$.
\end{thm}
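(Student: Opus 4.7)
My plan is to adapt the argument for the bistellar $0$-move used in Theorem \ref{thm:0-move}, with three ingredients: Lemma \ref{lem:aniso equiv}, to fix a common l.s.o.p.\ for $\kk[\Delta]$ and $\kk[\Delta']$; Gorenstein duality (Theorem \ref{thm:algebraic property}), to reduce to middle-degree anisotropy; and Lee's formula (Theorem \ref{thm:Lee}) combined with the characteristic-$2$ Frobenius identity $(\alpha+\beta)^2 = \alpha^2+\beta^2$, to compute squares explicitly. Since $\chi_\tau$ on $\Delta'$ inverts $\chi_\sigma$, it suffices to show one implication, say $\kk(\Delta;\Theta)$ anisotropic $\Rightarrow \kk(\Delta';\Theta)$ anisotropic. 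For $0 < q < 2n-1$ the vertex sets of $\Delta$ and $\Delta'$ coincide; I take a single l.s.o.p.\ $\Theta$ with $M_\Theta = (I_{2n}\mid A)$ over $\kk = \mathbb{F}(a_{ij})$, used simultaneously for $\kk[\Delta]$ and $\kk[\Delta']$. The extremes $q = 0$ and $q = 2n-1$ (where one vertex is added or removed) are folded in by the filtration-of-field-extensions argument used in the proofs of Lemma \ref{lem:aniso equiv} and Theorem \ref{thm:0-move}.

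Let $\Gamma = \{F \in \Delta : \sigma\not\subseteq F\} = \{F \in \Delta' : \tau\not\subseteq F\}$ be the common subcomplex; it is a $(2n-1)$-dimensional PL-ball with boundary $\partial\sigma * \partial\tau$. The short exact sequences
\[
0 \to (\xx_\sigma) \to \kk(\Delta) \to \kk(\Gamma) \to 0, \qquad 0 \to (\xx_\tau) \to \kk(\Delta') \to \kk(\Gamma) \to 0
\]
show that the two Artinian reductions differ only in the principal ideals generated by $\xx_\sigma$ and $\xx_\tau$, both isomorphic (with appropriate shifts) to face rings of the balls $\sigma*\partial\tau$ and $\partial\sigma*\tau$ modulo their boundaries. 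By Gorenstein duality, anisotropy of $\kk(\Delta';\Theta)$ in degrees $j < n$ reduces to anisotropy in degree $n$: if $0\neq\alpha\in\kk(\Delta';\Theta)_j$ with $\alpha^2 = 0$, pick $\beta\in\kk(\Delta';\Theta)_{n-j}$ with $\alpha\beta\neq 0$; then $(\alpha\beta)^2 = \alpha^2\beta^2 = 0$, a contradiction.

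For the middle-degree step, I would use a splitting of the second exact sequence above to write any $\alpha\in\kk(\Delta')_n$ as $\alpha = \alpha_\Gamma + \xx_\tau\beta$, where $\alpha_\Gamma$ is represented by face monomials supported on $\Gamma$ and $\beta\in\kk(\partial\sigma)_{n-q-1}$ represents a class in $\kk(\mathrm{lk}_\tau\Delta')_{n-q-1}$. In characteristic $2$ the cross term vanishes, so $\alpha^2 = \alpha_\Gamma^2 + \xx_\tau^2\beta^2$. When $\beta = 0$, the element $\alpha_\Gamma$ admits a natural lift $\widetilde\alpha\in\kk(\Delta)_n$ (the same formal sum of face monomials, now viewed in $\kk(\Delta)$); a compatibility check using the common l.s.o.p. shows $\alpha^2 = 0$ in $\kk(\Delta')$ implies $\widetilde\alpha^2 = 0$ in $\kk(\Delta)$, and the hypothesis yields $\widetilde\alpha = 0$, hence $\alpha = 0$. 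To rule out $\beta\neq 0$, I mimic the end of the proof of Theorem \ref{thm:0-move}: filter $\kk$ through subfields adjoining the entries of the $\tau$-columns of $M_\Theta$ one at a time, apply a localization-specialization homomorphism $\eta_i$ sending the newest variable to $0$, and use Lee's formula to identify the poles of $\Psi_{\Delta'}(\xx_\tau^2\beta^2)$ in these variables to force a contradiction from $\alpha^2 = 0$.

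The main obstacle is this final specialization. In the $0$-move case $\tau$ is a single vertex and one variable $a_{m,d}$ yields the decisive pole, but for general $q$ the flipped block $(\boldsymbol{\lambda}_v)_{v\in\tau}$ of $M_\Theta$ has $q+1$ columns, and one must choose a specialization that degenerates $\Psi_{\Delta'}$ on $\mathrm{st}_\tau\Delta'$ into $\Psi_\Delta$ on $\mathrm{st}_\sigma\Delta$ while leaving the $\Gamma$-facet contributions well-behaved. Verifying that Lee's formula specializes consistently across the flip, and that the resulting rational functions have disjoint pole structures that cannot cancel, is the core technical difficulty.
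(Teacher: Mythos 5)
Your high-level scaffolding matches the paper's: reduce via Lemma~\ref{lem:aniso equiv} to a fixed generic l.s.o.p., use Gorenstein duality to reduce to middle degree~$n$, split an element of $\kk(\Delta')_n$ into a part supported away from $\mathrm{st}_\tau\Delta'$ (which embeds into $\kk(\Delta)$ via the common ideal $\kk(\Delta',\mathrm{st}_\tau\Delta')=\kk(\Delta,\mathrm{st}_\sigma\Delta)$) plus a ``new'' term, and use the characteristic-$2$ Frobenius identity to kill the cross term. You also correctly identify that for $q\geq n$ the new term is forced to vanish for degree reasons, so the only genuinely hard case is $0<q<n$.

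The gap is exactly where you flag it: the treatment of the new term when $0<q<n$. You propose to iterate the localization-specialization device from Theorem~\ref{thm:0-move}, adjoining the $q+1$ columns of $M_\Theta$ indexed by $\tau$ one variable at a time and sending each to $0$ to extract a pole. This is not how the paper proceeds, and it is not clear it can be made to work: in the $0$-move proof the decisive pole of $\Psi_{\Delta'}(\xx_{\sigma_1}^2)$ sits in a \emph{single} variable $a_{m,d}$ that appears nowhere in $\Psi_\Delta(\xx_{\sigma_i}\xx_{\sigma_j})$ for $i,j\geq 2$, but after a general $q$-move the coefficients $l_i$ themselves are unknown rational functions in the variables you want to specialize, so one cannot control cancellation by tracking poles of $\Psi$ alone. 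The paper sidesteps this by (i) constructing a bespoke l.s.o.p.\ $\Theta_1$ over a smaller field $\kk_1=\kk_0(b_1,\dots,b_{2n})$ whose column structure makes $\xx_{\sigma_1}^2 = \bigl(\prod_{i=q+2}^n b_i / \prod_{i=n+1}^{2n} b_i\bigr)\, x_1\cdots x_{2n}$ an explicit monomial in the $b_i$, and (ii) applying the partial differential operator $\PP = \partial^{n}/\partial b_{2n-q}\cdots\partial b_{2n}$ to $\Psi_{\Delta'}(\alpha^2)$. In characteristic~$2$ one has $\PP(l_i^2 f)=l_i^2\,\PP(f)$, so $\PP$ passes through the unknown squared coefficients $l_i^2$ without needing any information about their pole structure; combined with Lee's formula (Theorem~\ref{thm:Lee}), which shows the $b$-variables in the range $[2n-q,2n]$ are absent from $\Psi_{\Delta'}(\xx_{\sigma_i}^2)$ for $i\geq 2$, this isolates the $\sigma_1$-contribution and forces $\alpha^2\neq 0$. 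Replacing the specialization by this differential operator is the key new idea of Theorem~\ref{thm:bistellar}, and it is precisely the piece missing from your proposal.
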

\begin{proof}
The case $q=0$ or $2n-1$ is Theorem \ref{thm:0-move}. So we assume $q\neq0,\,2n-1$. Let $\kk$ be the field extension of $\mathbb{F}$, and $\Theta$ be the l.s.o.p. for $\kk[\Delta]$ and $\kk[\Delta']$, as defined at the beginning of section \ref{sec:equivalence}.	Assume $\kk(\Delta;\Theta)$ is anisotropic. We need to show that $\kk(\Delta';\Theta)$ is also anisotropic. As we have seen in the proof of Theorem \ref{thm:0-move}, it suffices to show that  $\kk(\Delta';\Theta)$ is anisotropic in degree $n$.
	
	Suppose that $\FF_0(\Delta)=\FF_0(\Delta')=[m]$, and $\Delta'=\chi_\sigma\Delta$ for some $\sigma\in\Delta$ with $\mathrm{lk}_\sigma\Delta=\partial\Delta^q=\partial\tau$. Then there are short exact sequences:
	\begin{gather}
		0\to \kk(\Delta,\mathrm{st}_\sigma\Delta)\to\kk(\Delta)\to\kk(\mathrm{st}_\sigma\Delta)\to0,\label{eq:delta}\\
			0\to \kk(\Delta',\mathrm{st}_\tau\Delta')\to\kk(\Delta')\to\kk(\mathrm{st}_\tau\Delta')\to0.
			\label{eq:delta'}
	\end{gather}
	Since $\kk(\mathrm{st}_\tau\Delta')_i=0$ for $i\geq\dim\sigma+1=2n-q$,
	it follows from \eqref{eq:delta} and \eqref{eq:delta'} that if $q\geq n$, then  
	\[\kk(\Delta')_{\geq n}=\kk(\Delta',\mathrm{st}_\tau\Delta')_{\geq n}=\kk(\Delta,\mathrm{st}_\sigma\Delta)_{\geq n}\subset\kk(\Delta)_{\geq n}.\] 
	Hence $\kk(\Delta')$ is automatically anisotropic  when $q\geq n$. It remains to consider the case when $0<q<n$.
	
	Without loss of generality we assume that $\tau=[q+1]$, $\sigma=[2n+1]\setminus[q+1]$.
	Let $\kk_0$ be the rational function field
	\[\mathbb{F}(a_{ij}:1\leq i\leq 2n,\,2n+2\leq j\leq m),\]
	and let $\kk_1$ be the rational function field $\kk_0(b_1,\dots,b_{2n})$.
	Suppose that $\Theta_1=(\theta_1,\dots,\theta_{2n})$ is a sequence  of linear forms of $\kk_1[x_1,\dots,x_m]$ such that the column vectors of $M_\Theta$ are given by
	\[\boldsymbol{\lambda}_{j}=
	\begin{cases}
	(a_{1,j},a_{2,j}\dots,a_{2n,j})^T  &\text{ for } 2n+2\leq j\leq m,\\
	(b_1,\dots,b_{2n-q-1},0,\dots,0)^T  &\text{ for } j=2n+1,\\
	(\lambda_{1,j},\lambda_{2,j},\dots,\lambda_{2n,j})^T  &\text{ for } 1\leq j\leq 2n,
	\end{cases}\]
	where 
	\[\lambda_{ij}=
	\begin{cases}
	 1  &\text{ if } i=j,\\
	b_i  &\text{ if } i-j=2n-q-1,\\
0  &\text{ otherwise.} 
	\end{cases}\]
It is easy to verify the following facts: 
\begin{enumerate}[(a)]
	\item $\Theta_1$ is an l.s.o.p. for both $\kk_1[\Delta]$ and $\kk_1[\Delta']$;
	\item\label{it:b} $\kk_1(\mathrm{st}_\tau\Delta';\Theta_1)_n=\kk_1$ is spanned by the face monomial $\xx_{\sigma_1}:=x_1\cdots x_n$;
	\item\label{it:c} \[\xx_{\sigma_1}^2=\frac{\prod_{i=q+2}^nb_i}{\prod_{i=n+1}^{2n}b_i}\cdot\prod_{i=1}^{2n}x_i\in\kk_1(\Delta';\Theta_1)_{2n}.\]
\end{enumerate}  
Here we set $\prod_{i=q+2}^nb_i=1$ if $q=n-1$. 

Similar to Lemma \ref{lem:aniso equiv}, one can show that $\kk(\Delta;\Theta)$ (resp. $\kk(\Delta';\Theta)$) is anisotropic if and only if $\kk_1(\Delta;\Theta_1)$ (resp. $\kk_1(\Delta';\Theta_1)$) is anisotropic. The rest of the proof is to  derive the anisotropy of $\kk_1(\Delta';\Theta_1)$ from the anisotropy of $\kk_1(\Delta;\Theta_1)$. 

By \eqref{eq:delta'} and fact \eqref{it:b}, $\kk_1(\Delta')_n$ has a basis $\{\sigma_1,\sigma_2,\dots,\sigma_s\}$ ($s=h_n(\Delta')$) with $\sigma_i\in \Delta'\setminus\mathrm{st}_\tau\Delta'$ for $2\leq i\leq s$. 
For a nonzero element $\alpha\in\kk_1(\Delta')_n$, write \[\alpha=\sum_{i=1}^sl_i\xx_{\sigma_i},\ l_i\in\kk_1.\]
 If $l_1=0$, then $\alpha\in\kk_1(\Delta',\mathrm{st}_\tau\Delta')=\kk_1(\Delta,\mathrm{st}_\sigma\Delta)\subset \kk_1(\Delta)$. Since $\kk_1(\Delta)$ is anisotropic, we have $0\neq\alpha^2\in\kk_1(\Delta',\mathrm{st}_\tau\Delta')\subset \kk_1(\Delta')$ in this case. 
 
 To deal with the case $l_1\neq0$, define a partial differential operator 
 \[\PP:=\frac{\partial^n}{\partial b_{2n-q}\cdots\partial b_{2n}}.\]
 Then, we have
 \[\PP\Psi_{\Delta'}(\alpha^2)=\PP\sum_{i=1}^s l_i^2\Psi_{\Delta'}(\xx_{\sigma_i}^2)=\sum_{i=1}^s l_i^2\PP\Psi_{\Delta'}(\xx_{\sigma_i}^2),\]
 where the first and second equality both come from the assumption that $\mathrm{char}\,\mathbb{F}=2$.
Since $\tau=[q+1]\not\in \Delta'\setminus\mathrm{st}_\tau\Delta'$, it follows that for any $2\leq i\leq s$, there exist $1\leq n_i\leq q+1$ and a facet $F_i$ in $\mathrm{st}_{\sigma_i}\Delta'$ such that $n_i\not\in F_i$. This implies that the variable $b_{m_i}$, where $m_i=n_i+2n-q-1$, dose not appear in the rational functions $A_{F_i}$ and $A_{F_i}(j)$ for  any $2\leq i\leq s$ and $j\in F_i$ (see the discussion preceding Theorem \ref{thm:Lee} for the definitions of $A_{F_i}$ and $A_{F_i}(j)$). 
Note that  $2n-q\leq m_i\leq 2n$.
Therefore $\PP\Psi_{\Delta'}(\xx_{\sigma_i}^2)=0$ for all $2\leq i\leq s$ by Theorem \ref{thm:Lee}, and then $\PP\Psi_{\Delta'}(\alpha^2)=l_1^2\PP\Psi_{\Delta'}(\xx_{\sigma_1}^2)$.
Combining this with fact \eqref{it:c} and the fact that \[\Psi_{\Delta'}(x_1\cdots x_{2n})=\frac{1}{\det(M_{\Theta_1}([2n]))}=1,\]
 we immediately have $\PP\Psi_{\Delta'}(\alpha^2)\neq 0$, and so $\alpha^2\neq 0$.
 
Thus for any nonzero element $\alpha\in\kk_1(\Delta')_n$, we have $\alpha^2\neq0$, finishing the proof of the theorem.
\end{proof}

\section{Anisotropy and Lefschetz property of simplicial 2-spheres}\label{sec:2-sphere}	
In this section $\mathbb{F}$ denotes a field of arbitrary characteristic.

In section \ref{sec:equivalence}, we have seen that the generic anisotropy of odd dimensional spheres implies the generic anisotropy of even dimensional spheres of one lower dimension.
A natural question is that whether the same condition implies the generic anisotropy of even dimensional spheres of one higher dimension. In this section we will show that the answer to this question is yes for PL-spheres.
\begin{thm}\label{thm:general char}
	If every PL-sphere of dimension $2n-1$ is generically anisotropic over $\mathbb{F}$, then every PL-sphere of dimension $2n$ is also generically anisotropic over $\mathbb{F}$, and has the Lefschetz property over any infinite field of the same characteristic as $\mathbb{F}$. 
	\end{thm}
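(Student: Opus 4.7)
The plan is to mimic the bistellar-move strategy of Theorem \ref{thm:bistellar}, using the $(2n-1)$-anisotropy hypothesis in place of the characteristic $2$ identities that powered the middle-degree step there. By Pachner's theorem every $2n$-PL-sphere is bistellarly equivalent to $\partial\Delta^{2n+1}$, which is directly checked to be generically anisotropic. Thus it suffices to show that generic anisotropy in middle degree $n$ is preserved under each bistellar $q$-move on a $2n$-PL-sphere; anisotropy in the lower degrees $i<n$ then follows from Gorenstein duality, exactly as at the end of the proof of Theorem \ref{thm:0-move}.

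The cases $q=0$ and $q=2n$ are handled by Theorem \ref{thm:0-move}. For $0<q<2n$ with $q\neq n$, the Hilbert-series computation from the proof of Theorem \ref{thm:bistellar} works unchanged: the cone $\mathrm{st}_\tau\Delta'=\tau*\partial\sigma$ has Artinian reduction with Hilbert series $1+t+\cdots+t^{2n-q}$, so $\kk(\mathrm{st}_\tau\Delta')_n=0$ whenever $q>n$, and the short exact sequence $0\to\kk(\Delta',\mathrm{st}_\tau\Delta')\to\kk(\Delta')\to\kk(\mathrm{st}_\tau\Delta')\to 0$ gives $\kk(\Delta')_n=\kk(\Delta,\mathrm{st}_\sigma\Delta)_n\subseteq\kk(\Delta)_n$, transferring anisotropy from $\Delta$ to $\Delta'$. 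The case $q<n$ is the inverse move.

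The main obstacle is the middle move $q=n$, where both $\kk(\mathrm{st}_\tau\Delta')_n$ and $\kk(\mathrm{st}_\sigma\Delta)_n$ are nonzero so neither sequence collapses. Here I would bring in the $(2n-1)$-anisotropy hypothesis via vertex links: a direct check shows that for each vertex $v\in\sigma\cup\tau$ the bistellar $n$-move on $\Delta$ restricts to a bistellar move on the $(2n-1)$-PL-sphere $\mathrm{lk}_v\Delta$ (an $n$-move at $\sigma\setminus\{v\}$ when $v\in\sigma$ and an $(n-1)$-move at $\sigma$ when $v\in\tau$), and the link is unchanged for $v\notin\sigma\cup\tau$; hence $\mathrm{lk}_v\Delta'$ is always a $(2n-1)$-PL-sphere and generically anisotropic by hypothesis. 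The degree-preserving surjection $\kk(\Delta')\twoheadrightarrow\kk(\mathrm{lk}_v\Delta')$ coming from the isomorphism $\kk(\mathrm{st}_v\Delta')\cong\kk(\mathrm{lk}_v\Delta')$ (as in the proof of Theorem \ref{thm:odd}) then yields: for nonzero $\alpha\in\kk(\Delta')_n$, if the image $\bar\alpha_v$ is nonzero in some $\kk(\mathrm{lk}_v\Delta')_n$, anisotropy of the link forces $\bar\alpha_v^2\ne 0$ in the top-degree piece $\kk(\mathrm{lk}_v\Delta')_{2n}$, whence $\alpha^2\ne 0$. The remaining joint-kernel step $\bigcap_v\ker(\kk(\Delta')_n\to\kk(\mathrm{lk}_v\Delta')_n)=0$ I would settle by a Lee's-formula computation (Theorem \ref{thm:Lee}) in the spirit of the middle-degree analysis at the end of the proof of Theorem \ref{thm:bistellar}, choosing an l.s.o.p.\ so that a carefully chosen differential operator detects the distinguished face monomial $\xx_\sigma$ and rules out cancellations.

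Finally, the Lefschetz-property conclusion follows from generic anisotropy by the general implication proved in \cite[Section 9]{PP20}. The technical difficulty is concentrated in the joint-kernel argument for $q=n$: in arbitrary characteristic one no longer has the identity $(a+b)^2=a^2+b^2$ that made the Lee's-formula step clean in Theorem \ref{thm:bistellar}, so a more delicate choice of l.s.o.p.\ and auxiliary witness elements will be required, and this is the step I expect to be the principal obstacle.
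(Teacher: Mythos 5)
Your high-level plan (Pachner reduction plus an appeal to $(2n-1)$-anisotropy via vertex links) is on the right track, but the execution has two genuine gaps, and the paper's actual argument is structured quite differently.

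\textbf{Gap 1: the $q\ne n$ collapse only gives one implication.} For a bistellar $q$-move on a $2n$-sphere, the Hilbert-series collapse $\kk(\mathrm{st}_\tau\Delta')_n=0$ only holds for $q>n$, and it yields $\kk(\Delta')_n\subseteq\kk(\Delta)_n$, i.e.\ anisotropy of $\Delta$ implies anisotropy of $\Delta'$. For $q<n$ the inverse move is a $(2n-q)$-move with $2n-q>n$, and applying the collapse there gives only the \emph{reverse} implication: anisotropy of $\Delta'$ implies anisotropy of $\Delta$. The Pachner argument needs the forward direction along the chosen sequence of moves, so the forward implication for $q<n$ (just like both directions for $q=n$) still requires the ``hard'' argument. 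Writing ``the case $q<n$ is the inverse move'' does not discharge this.

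\textbf{Gap 2: the joint-kernel step is left open, and the proposed repair does not land where Lee's formula applies.} You acknowledge this is the principal obstacle, but there is a structural reason the repair as sketched is unlikely to work directly: for $\alpha\in\kk(\Delta')_n$ with $\dim\Delta'=2n$, the square $\alpha^2$ lives in degree $2n$, which is \emph{not} the socle degree $2n+1$ of $\kk(\Delta')$, so the canonical function $\Psi_{\Delta'}$ and Lee's formula are not directly applicable to $\alpha^2$. (In Theorem~\ref{thm:0-move} the even-dimensional case is rescued by multiplying by a specific $x_m$ to reach top degree, but that relied on the special structure of a $0$-move.) Separately, it is not at all clear that $\bigcap_v\ker\bigl(\kk(\Delta')_n\to\kk(\mathrm{lk}_v\Delta')_n\bigr)=0$ in general, and even where the image in some link is nonzero one must also justify that the l.s.o.p.\ obtained by restriction to the link is ``generic enough'' for the hypothesis to apply.

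\textbf{How the paper actually does it.} The paper sidesteps both problems by working with the cone $K=\{v\}*\Delta$ and the canonical module $\kk(K,\Delta)$: via the isomorphism $\kk(\Delta)_*\cong\kk(K,\Delta)_{*+1}$, an element of middle degree $n$ becomes $\alpha\in\kk(K,\Delta)_{n+1}$, whose square lands in the socle degree $2n+2$ of $\kk(K,\Delta)$, so $\Psi_K$ and Lee's formula apply. It then chooses the l.s.o.p.\ so that a single distinguished vertex $1\in\sigma$ contributes a variable $a_{1,1}$, splits a basis of $\kk(K,\Delta)_{n+1}$ into the part $\Aa$ coming from $\mathrm{st}_{\{1\}}\Delta$ and the complementary part $\BB$, and observes (equations \eqref{eq:degree1}--\eqref{eq:degree2}) that $\Psi_K(\xx_{\sigma_i}\xx_{\sigma_j})$ has $a_{1,1}$-degree exactly $1$ with leading coefficient $\pm\Psi_{L_1}(\xx_{\rho_i}\xx_{\rho_j})$ for $L_1=\mathrm{lk}_{\{1\}}\Delta$, while the cross and $\BB$-terms have $a_{1,1}$-degree $0$. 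A leading-term analysis (Lemma~\ref{lem:leading}) then shows $\Psi_K(\alpha^2)\ne0$ in either of two exclusive regimes, one powered by the anisotropy of the $(2n-1)$-sphere $L_1$ (your hypothesis) and the other by the anisotropy of the bistellar-moved cone pair $(K',\Delta')$ (the inductive step). This uniform argument covers all $q$ with $0<q<2n$, in both directions, and never needs a short-exact-sequence collapse, a special $q=n$ case, or a joint-kernel statement.
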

Since every simplical $1$-sphere is generically anisotropic over $\mathbb{F}$ by Theorem \ref{thm:0-move} and all simplicial $2$-sphere are PL, the following corollary is an immediate consequence of Theorem \ref{thm:general char}.
\begin{cor}\label{cor:2-sphere}
	Every simplicial $2$-sphere is generically anisotropic over any field $\mathbb{F}$, and has the Lefschetz property over any infinite field $\kk$.
\end{cor}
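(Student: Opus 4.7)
The plan is to derive the corollary as a direct specialization of Theorem \ref{thm:general char} in the base case $n = 1$. Two ingredients are needed: (a) every PL $1$-sphere is generically anisotropic over $\mathbb{F}$, and (b) every simplicial $2$-sphere is PL.

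For (a), I would start with $\partial\Delta^2$, whose $h$-vector is $(1,1,1)$; for a generic l.s.o.p.\ the unique nonzero class in degree $1$ squares to a nonzero class in degree $2$ by an immediate determinantal computation (or by Theorem \ref{thm:Lee}), so $\partial\Delta^2$ is trivially generically anisotropic over $\mathbb{F}$. Every simplicial $1$-sphere is obtained from $\partial\Delta^2$ by a finite sequence of bistellar $0$-moves (each subdividing one edge by inserting a new vertex), so iterated application of Theorem \ref{thm:0-move} propagates generic anisotropy to every simplicial $1$-sphere. Since every simplicial $1$-sphere is tautologically PL, this gives (a). For (b), I would invoke the classical topological fact (going back to Rad\'o and completed by Moise) that every triangulated closed surface is PL; in particular, every simplicial $2$-sphere admits a common subdivision with $\partial\Delta^3$ and is therefore a PL $2$-sphere.

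With (a) and (b) in hand, Theorem \ref{thm:general char} at $n = 1$ immediately yields that every simplicial $2$-sphere is generically anisotropic over the arbitrary field $\mathbb{F}$. The Lefschetz property over any infinite field $\kk$ then follows by applying the same theorem with $\mathbb{F}$ chosen to have the same characteristic as $\kk$ (for instance, the prime subfield of $\kk$, or $\kk$ itself), since the theorem's second conclusion gives the Lefschetz property over every infinite field of that characteristic. There is essentially no obstacle in this argument: all the substantive work has been carried out in Theorems \ref{thm:0-move} and \ref{thm:general char}, and the corollary is a pure bookkeeping deduction from them, together with the topological input (b). The only point where one might linger is confirming that the combinatorial $1$-sphere case truly reduces to $\partial\Delta^2$ via bistellar $0$-moves, but this is immediate once one matches vertex counts.
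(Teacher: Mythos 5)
Your proposal is correct and follows the same route as the paper: deduce generic anisotropy of all simplicial $1$-spheres from Theorem \ref{thm:0-move} via bistellar $0$-moves starting at $\partial\Delta^2$, note that every simplicial $2$-sphere is PL, and then invoke Theorem \ref{thm:general char} with $n=1$ to get both conclusions. The only difference is that you spell out the base case for $\partial\Delta^2$ and the characteristic-matching step for the Lefschetz conclusion, which the paper leaves implicit.
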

Note that the Lefschetz property of simplicial $2$-spheres is also implied by the result in \cite{Murai10}.

Before giving the proof of Theorem \ref{thm:general char}, we state an easy result about rational functions without proof.
Let $\kk$ be a field, and let $\kk(x)$ be the  field of rational functions over $\kk$. For a nonzero element $\phi=f/g\in\kk(x)$ with $f,g\in\kk[x]$,
define the degree of $\phi$ by $\deg(\phi)=\deg(f)-\deg(g)$, and define the leading coefficient of $\phi$ as $L(\phi):=L(f)/L(g)$, where $L(f)$ and $L(g)$ are the leading coefficient of $f$ and $g$ respectively. Moreover, we assume $\deg(0)=-\infty$ and $L(0)=0$ in $\kk(x)$.
\begin{lem}\label{lem:leading}
Let $\kk(x)$ be as above. Then for a nonzero element $\alpha=\sum_{i\in I}\phi_i$ with $\phi_i\in\kk(x)$, we have 
\[\deg(\alpha)\leq M:=\max\{\deg(\phi_i):i\in I\},\]
where equality holds if and only if $\sum_{\deg(\phi_i)=M}L(\phi_i)\neq0$.
\end{lem}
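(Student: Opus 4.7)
The plan is to reduce the statement for rational functions to the corresponding elementary fact for polynomials by clearing denominators. For each $i\in I$ with $\phi_i\neq 0$ write $\phi_i=f_i/g_i$ with $f_i,g_i\in\kk[x]$ and $g_i\neq 0$, and set $g=\prod_{i\in I}g_i$. Then every $\phi_i$ can be written as $\tilde f_i/g$ with $\tilde f_i\in\kk[x]$, where $\tilde f_i=0$ for the indices with $\phi_i=0$. For nonzero terms one has $\deg(\tilde f_i)=\deg(\phi_i)+\deg(g)$ and $L(\tilde f_i)=L(\phi_i)\cdot L(g)$; in particular $\max_i\deg(\tilde f_i)=M+\deg(g)$, and the indices realizing this maximum are exactly those with $\deg(\phi_i)=M$. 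Hence $\alpha=\bigl(\sum_{i\in I}\tilde f_i\bigr)/g$, so $\deg(\alpha)=\deg\bigl(\sum_i\tilde f_i\bigr)-\deg(g)$.

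Next I would invoke the standard polynomial statement: for any finite family $h_i\in\kk[x]$ with $N=\max_i\deg(h_i)$, one has $\deg\bigl(\sum_i h_i\bigr)\leq N$, and equality holds iff $\sum_{\deg(h_i)=N}L(h_i)\neq 0$. This is just the observation that the coefficient of $x^N$ in $\sum_ih_i$ equals $\sum_{\deg(h_i)=N}L(h_i)$; whenever this coefficient is nonzero it is the leading coefficient of the sum, and otherwise the $x^N$ term cancels and the degree drops strictly below $N$.

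Applying this to $h_i=\tilde f_i$ gives $\deg(\alpha)\leq M$ and, in the equality case, identifies the leading coefficient of $\sum_i\tilde f_i$ as $\sum_{\deg(\phi_i)=M}L(\tilde f_i)=L(g)\sum_{\deg(\phi_i)=M}L(\phi_i)$. Since $L(g)\neq 0$, this sum is nonzero iff $\sum_{\deg(\phi_i)=M}L(\phi_i)\neq 0$, yielding the desired equivalence.

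The only thing requiring care is bookkeeping under the conventions $\deg(0)=-\infty$, $L(0)=0$: zero summands contribute $\tilde f_i=0$, do not participate in the maximum, and add $0$ to the sum of leading coefficients, so they are harmless. There is no real obstacle here; the entire content of the lemma is the polynomial fact in the second paragraph, and the reduction in the first paragraph is purely formal.
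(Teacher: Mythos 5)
Your proof is correct. The paper explicitly states this lemma ``without proof'' (calling it an easy result about rational functions), so there is no proof in the paper to compare against; your clearing-of-denominators reduction to the elementary polynomial fact is the natural argument and handles the $\deg(0)=-\infty$, $L(0)=0$ conventions cleanly.
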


\begin{proof}[Proof of Theorem \ref{thm:general char}]
	Suppose that $\Delta$ is a $2n$-dimensional PL-sphere with vertex set $[m]$, and set $K=\Delta^0*\Delta=\{v\}*\Delta$. Let $\kk$ be the rational function field
	\[\mathbb{F}(a_{ij}:1\leq i\leq 2n+2,\,1\leq j\leq m).\]
	Choose an l.s.o.p. $\Theta$ for $\kk[K]$ such that $M_\Theta=(A\mid\boldsymbol{\lambda}_v)$, where $A=(a_{ij})$ is the $(2n+2)\times m$ matrix, and $\boldsymbol{\lambda}_v=(1,0\dots,0)^T$.
	If we can show that $\kk(K,\Delta;\Theta)$ is anisotropic, then $\Delta$ is generically anisotropic over $\mathbb{F}$ by the argument in the proof of Theorem \ref{thm:odd}, and the second statement of the theorem will follow from the result in \cite[Section 9]{PP20}.
	As we have seen before, it suffices to verify that this property of $\kk(K,\Delta;\Theta)$ is preserved by bistellar moves on $\Delta$. A result similar to Theorem \ref{thm:0-move} reduces us to considering bistellar $q$-moves for $q\neq 0,\,2n$.
	 
	 Suppose that $\Delta'=\chi_\sigma\Delta$ for some $\sigma\in\Delta$ ($\dim\sigma\neq 0,\,2n$) with $\mathrm{lk}_\sigma\Delta=\partial\tau$, and without loss of generality assume that $\sigma\cup\tau=[2n+2]$ and $1\in\sigma$. 
	 Let $K'=\{v\}*\Delta'$. 
	 Define two subfield of $\kk$ as 
	 \[\kk_0=\mathbb{F}(a_{ij}:1\leq i\leq 2n+2,\,2\leq j\leq m),\ \ \kk_1=\kk_0(a_{1,1}),\]
	 and let $\Theta_1=(\theta_1,\theta_2,\dots,\theta_{2n+2})$ be the l.s.o.p. for both $\kk_1[K]$ and $\kk_1[K']$, such that $M_{\Theta_1}$ is obtained from $M_\Theta$ by replacing the column vector $\boldsymbol{\lambda}_1$ with $(a_{1,1},1,0,\dots,0)^T$. 
	 Then by the proof of Lemma \ref{lem:aniso equiv}, $\kk(K,\Delta;\Theta)$ (resp. $\kk(K',\Delta';\Theta)$) is anisotropic if and only if $\kk_1(K,\Delta;\Theta_1)$ (resp. $\kk_1(K',\Delta';\Theta_1)$) is anisotropic. For this reason, we only consider the anisotropy of $\kk_1(K,\Delta;\Theta_1)$ and $\kk_1(K',\Delta';\Theta_1)$ in what follows, and for simplicity, we omit $\Theta_1$ from the notation.
	
	Assume that $\kk_1(K',\Delta')$ is anisotropic. We need to show that $\kk_1(K,\Delta)$ is also anisotropic, i.e., for any nonzero element $\alpha\in\kk_1(K,\Delta)_{n+1}$, $\alpha^2\neq0$. We first construct a basis for $\kk_1(K,\Delta)_{n+1}$.
 Suppose that $\Aa_0=\{\rho_1,\dots,\rho_r\}\subset\mathrm{lk}_{\{1\}}\Delta$, where $r=h_n(\mathrm{lk}_{\{1\}}\Delta)$, forms a basis for $\kk(\mathrm{st}_{\{1\}}\Delta)_n$. Then the short exact sequence 
	\[0\to\kk_1(\Delta,\mathrm{st}_{\{1\}}\Delta)\to\kk_1(\Delta)\to\kk_1(\mathrm{st}_{\{1\}}\Delta)\to0\]
	implies that $\Aa_0$ can be extended to a basis $\Aa_0\cup\BB_0$ for $\kk_1(\Delta)_n$, where $\BB_0=\{\pi_1,\dots,\pi_s\}$ with $\pi_i\in \Delta\setminus\mathrm{st}_{\{1\}}\Delta$, $s=h_n(\Delta)-r$. Let $\sigma_i=\{v\}\cup\rho_i$, $\tau_i=\{v\}\cup\pi_i$, and $\Aa=\{\sigma_1,\dots,\sigma_r\}$, $\BB=\{\tau_1\dots,\tau_s\}$. Then the isomorphism $\kk(\Delta)\xr{\cdot x_v}\kk(K,\Delta)$ shows that $\Aa\cup\BB$ forms a basis for $\kk(K,\Delta)_{n+1}$.

For a nonzero element $\alpha\in\kk_1(K,\Delta)_{n+1}$, write $\alpha=\alpha_1+\alpha_2$, where
\[\alpha_1=\sum_{i=1}^rl_i\xx_{\sigma_i},\ l_i\in\kk_1,\ \text{ and }\ \alpha_2=\sum_{j=1}^sk_j\xx_{\tau_j},\ k_j\in\kk_1.\]
Now consider the value $\Psi_K(\alpha^2)$.
Since $x_v=-\sum_{i=1}^m a_{1,i}x_i$ in $\kk_1(K,\Delta)$, we have
\[\xx_{\sigma_i}\xx_{\sigma_j}=-\sum_{k=1}^m a_{1,k}\cdot x_vx_k\xx_{\rho_i}\xx_{\rho_j} \text{ for } 1\leq i,j\leq r.\]
Thus, setting $L_1=\mathrm{lk}_{\{1\}}\Delta$ and $\Theta_0=(\theta_3,\dots,\theta_{2n+2})$, and using Theorem \ref{thm:Lee} to compute the canonical functions $\Psi_K$ on $\kk_1(K,\Delta)_{2n+2}$ and $\Psi_{L_1}$ on $\kk_0(L_1;\Theta_0)_{2n}$ respectively, we see that 
\begin{equation}\label{eq:degree1}
\Psi_K(\xx_{\sigma_i}\xx_{\sigma_j})=\pm\Psi_{L_1}(\xx_{\rho_i}\xx_{\rho_j})a_{1,1}+b,\ \text{ for some}\ b\in\kk_0.
\end{equation}
Since $\tau_i\not\in\mathrm{st}_{\{1\}} K$ for all $\tau_i\in\BB$, a similar computation shows that 
\begin{equation}\label{eq:degree2} \Psi_K(\xx_{\tau_i}\xx_{\tau_j}),\,\Psi_K(\xx_{\sigma_i}\xx_{\tau_j})\in\kk_0\ \text{ for all } i,j.
\end{equation}
Thinking of $l_i,\,k_j$ as rational functions with coefficients in $\kk_0$, define \begin{gather*}
	m_1:=\max\{\deg(l_i):1\leq i\leq r\},\ \ m_2:=\max\{\deg(k_j):1\leq j\leq s\},\\ 
\beta_1=\sum_{\deg(l_i)=m_1}L(l_i)\xx_{\sigma_i},\  \beta_2=\sum_{\deg(k_j)=m_2}L(k_j)\xx_{\tau_j}.
\end{gather*}
Here $\deg(\cdot)$, $L(\cdot)$ are defined in the discussion preceding Lemma \ref{lem:leading}.

If $m_1<m_2$, then $\beta_2\neq 0$.
The assumption that  $\Delta'=\chi_\sigma\Delta$ and $1\in\sigma$ implies that $K\setminus(\mathrm{st}_{\{1\}} K\cup\Delta)\subset K'\setminus(\mathrm{st}_{\{1\}} K'\cup\Delta')$. Hence 
\[\beta_2\in\kk_1(K,\mathrm{st}_{\{1\}} K\cup\Delta)\subset \kk_1(K',\mathrm{st}_{\{1\}} K'\cup\Delta')\subset \kk_1(K',\Delta').\]
 It follows that $\Psi_K(\beta_2^2)=\Psi_{K'}(\beta_2^2)\neq 0$, since $\kk_1(K',\Delta')$ is anisotropic. By Lemma \ref{lem:leading}, this means that $\deg(\Psi_K(\alpha_2^2))=2m_2$. Furthermore, \eqref{eq:degree1} and \eqref{eq:degree2} give
 \[
 \deg(\Psi_K(\alpha_1^2))\leq 2m_1+1<2m_2,\ \ \deg(\Psi_K(\alpha_1\alpha_2))\leq m_1+m_2<2m_2.
 \]  
Thus $\deg(\Psi_K(\alpha^2))=2m_2$, and so $\alpha^2\neq0$ in this case. 

On the other hand, if $m_1\geq m_2$, then $\beta_1\neq 0$. It follows that \[0\neq\gamma_1:=\sum_{\deg(l_i)=m_1}L(l_i)\xx_{\rho_i}\in \kk_0(L_1;\Theta_0)_{n}.\] 
Hence, if the condition in the theorem holds, then $\Psi_{L_1}(\gamma_1^2)\neq0$, since $L_1$ is a $(2n-1)$-dimensional PL-sphere.
By \eqref{eq:degree1}, \eqref{eq:degree2} and Lemma \ref{lem:leading}, it follows  that \[\deg(\Psi_K(\alpha^2))=\deg(\Psi_K(\alpha_1^2))=2m_1+1.\]
So $\alpha^2\neq0$ still holds in this case, and the proof is complete.
\end{proof}
	
	\bibliography{M-A}
	\bibliographystyle{amsplain}
\end{document}